\newtheorem{lemma}{Lemma}
\DeclareMathOperator{\Diam}{diam}
\theoremstyle{thmstyleone}%
\newtheorem{theorem}{Theorem}
\theoremstyle{thmstyletwo}%
\theoremstyle{thmstylethree}%
\begin{document}

\title[Article Title]{Multiphysics embedding localized orthogonal decomposition for thermomechanical coupling problems}


\author[1]{\fnm{Yuzhou} \sur{Nan}}\email{2411834@tongji.edu.cn}

\author[2]{\fnm{Yajun} \sur{Wang}}\email{yajunwang@cuhk.edu.hk}

\author[2]{\fnm{Changqing} \sur{Ye}}\email{changqingye@cuhk.edu.hk}

\author[1]{\fnm{Hang} \sur{Qi}}\email{qihang7750@gmail.com}

\author*[1]{\fnm{Xiaofei} \sur{Guan}}\email{guanxf@tongji.edu.cn}

\affil[1]{%
	\orgdiv{School of Mathematical Sciences},%
	\orgname{Tongji University},%
	\orgaddress{\city{Shanghai}, \country{China}}
}

\affil[2]{%
	\orgdiv{Department of Mathematics},%
	\orgname{The Chinese University of Hong Kong},%
	\orgaddress{\city{Hong Kong SAR}, \country{China}}
}

\abstract{Multiscale modeling and analysis of multiphysics coupling processes in highly heterogeneous media present significant challenges. In this paper, we propose a novel multiphysics embedding localized orthogonal decomposition (ME-LOD) method for solving thermomechanical coupling problems, which also provides a systematic approach to address intricate coupling effects in multiphysical systems. Unlike the standard localized orthogonal decomposition (LOD) method that constructs separate multiscale spaces for each physical field, the proposed method features a unified construction for both displacement and temperature. Compared to the standard LOD method, our approach achieves operator stability reconstruction through orthogonalization while preserving computational efficiency. Several numerical experiments demonstrate that the ME-LOD method outperforms the traditional LOD method in accuracy, particularly in cases with significant contrasts in material properties.}

\keywords{Multiscale, Thermomechanical problems, Generalized finite element method, Local orthogonal decomposition}



\maketitle

\section{Introduction}
\label{section:intro} \qquad

Heterogeneous media are widely used in practical engineering, with their thermodynamic and mechanical properties spanning multiple temporal and spatial scales \cite{Multiscaling}. These properties are intrinsically linked to the underlying microstructure, composition, and other factors, making their behavior highly complex and difficult to predict \cite{hetemedia}. A key aspect of this complexity is thermomechanical coupling, where thermal and mechanical fields interact bidirectionally. Temperature variations can induce thermal expansion, leading to stress and deformation, while mechanical deformation, in turn, affects heat conduction through changes in material properties or contact conditions. In heterogeneous media, this coupling becomes even more intricate due to spatial variability in material properties, which significantly influences heat transfer, stress distribution, and overall structural response\cite{Huang2021}. These phenomena are particularly relevant in materials science, geophysics, biomedicine, and structural engineering, where accurate modeling is essential to predict performance \cite{14}, ensure structural integrity \cite{11}, and optimize design \cite{topopt,12}. As a result, understanding and efficiently solving thermomechanical coupling problems in heterogeneous media remains a key challenge in computational mechanics and multiphysics simulations.

Based on the first law of thermodynamics and the conservation of momentum, mathematical modeling of thermomechanical processes leads to coupled partial differential equations \cite{existence}, comprising hyperbolic mechanical equations and parabolic heat transfer equations. In the quasi-static case, where the inertia terms of mechanical equations are neglected, the existence and uniqueness of solutions for these coupled PDEs have been analyzed in \cite{N_existence,coupleFEM}. Finite element methods (FEM) have been developed for their numerical solution, with convergence analysis and error estimates provided in \cite{ana_FEM,error_couple}. Extensive research has been conducted on both 2D and 3D linear \cite{2D_FEM,3D_FEM} and nonlinear \cite{nonlinear_FEM} thermomechanical problems, including studies on coupling effects \cite{couple_effects}. 	

The inherent multiscale nature of heterogeneous materials presents significant computational challenges when solving thermomechanical problems using direct numerical methods, which are often prohibitively expensive \cite{ill_condition}. This highlights the need for robust and efficient multiscale methods that employ dimension-reduction techniques while preserving essential microscale information. 

To address these challenges, various multiscale methods have been developed, including constrained macro simulation and the generalized finite element method (GFEM) \cite{GFEM}. Constrained macro simulation derives macroscopic models from microscopic problems using techniques such as homogenization \cite{homogenization, heterogeneousMS}, variational multiscale methods \cite{variationalMS}, and computational homogenization \cite{computationalhomogenization}. Meanwhile, GFEM, first introduced in \cite{GFEM}, constructs specialized finite element spaces to tackle multiscale problems. The multiscale finite element method (MsFEM) \cite{MsFEM} builds basis functions from local microscale problems, with established convergence theories. The generalized multiscale finite element method (GMsFEM) \cite{GMsFEM,GMsFEM2} further improves accuracy by computing multiscale basis functions via local spectral problems. To enhance the robustness and localization properties, the constraint energy minimization generalized multiscale finite element method (CEM-GMsFEM) is proposed, where multiscale basis functions are constructed through auxiliary local spectral problems combined with energy minimization over oversampling domains, leading to exponential decay of basis functions and convergence rates independent of material heterogeneities \cite{CEMGMsFEM,CEMGMsFEM2}. These approaches have been successfully extended to thermomechanical and poroelastic problems.

While GMsFEM constructs multiscale basis functions through local spectral problems, it relies on predefined multiscale spaces and may require a significant amount of computational preprocessing. In contrast, the Local Orthogonal Decomposition (LOD) method \cite{LOD} provides a more flexible and efficient framework by directly modifying standard finite element spaces through localized corrections, making it particularly well-suited for non-periodic heterogeneous media and multiphysics coupling problems \cite{LODGFEM}. Rooted in the variational multiscale method \cite{variationalMS}, LOD enhances the classical finite element framework by modifying the coarse space with locally well-approximated basis functions. This method has proven highly effective in capturing fine-scale features in heterogeneous media while maintaining computational efficiency, especially for elliptic equations. By decomposing the solution space into coarse and fine-scale components, LOD enables the resolution of small-scale heterogeneities without excessive computational cost. Recently, Moritz Hauck and Daniel Peterseim further extended the LOD framework by developing the super-localization method \cite{superLOD}. The super-localization incorporates novel localization techniques that significantly enhance the computational efficiency of the method while preserving its accuracy. This advancement enables more effective handling of complex heterogeneous media and efficiently resolves fine-scale features, further boosting the applicability of LOD for challenging problems.

However, conventional multiscale algorithms face significant challenges when addressing multiphysics coupling problems. For example, extending LOD to more complex thermomechanical problems presents significant challenges \cite{LODGFEM}. Unlike elliptic equations, thermomechanical systems involve coupled parabolic equations, introducing temporal dynamics and intricate spatial interactions that the traditional LOD framework does not readily accommodate. Standard LOD approaches typically handle multiscale problems by constructing basis functions separately for the decoupled elliptic operators of the mechanical and thermal components. But this separation fails to capture the strong coupling effects between thermal and mechanical fields, leading to potential inaccuracies in predicting the behavior of heterogeneous materials under thermomechanical loading.

To overcome these limitations, we propose a novel method, the Multiphysics Embedding  Localized Orthogonal Decomposition (ME-LOD). Compared to conventional LOD methods \cite{LODGFEM,LODGFEM2}, which independently construct multiscale basis functions for the mechanical and thermal parts, the ME-LOD method treats the coupled system as a unified operator. By integrating the governing equations within a unified framework, we develop localized basis functions that inherently encode the interdependencies between thermal and mechanical effects at different scales, thereby providing a robust framework for tackling the multiscale complexities inherent in these systems. By integrating the localization strategy with advancements in multiscale finite element analysis, the ME-LOD method not only retains the advantages of the original method but also enhances its capability to effectively model thermal effects alongside mechanical interactions. By combining localization strategies with advanced techniques, ME-LOD retains the computational efficiency of traditional LOD while significantly improving its ability to model complex thermomechanical interactions. This makes it particularly well-suited for engineering applications where multiscale effects play a crucial role, such as in materials science, geomechanics, and structural analysis.

This paper provides a comprehensive theoretical foundation for ME-LOD, detailing its mathematical formulation, numerical implementation, and practical applicability. The remainder of this paper is structured as follows: Section \ref{sec:notation} introduces the mathematical model and some notations of the thermal-mechanical coupling problem. section \ref{sec:MELOD} starts from the traditional LOD and gives the multiscale space construction and algorithm implementation of the ME-LOD method. Section \cref{sec:error} proves the convergence of the mentioned algorithm. Section \ref{sec:experiments} shows some numerical examples. Section \ref{sec:conclusion} is the summary of this chapter and the follow-up prospects.

\section{General setting and notation}
\label{sec:notation}
In this paper, we investigate a linear thermomechanical problem modeling the quasi-static deformation of heterogeneous media within a domain $\Omega$, where $\Omega \subset \mathbb{R}^d$, $d = 2, 3$, denotes a convex, bounded polygonal or polyhedral domain with Lipschitz continuous boundary $\partial \Omega$. The time interval is given by $\left[0,T\right]$ with $T > 0$. The solid phase is assumed to be a linear elastic material, and the deformation is coupled with the temperature field. Then the problem consists in finding a displacement field $\bm{u} :  \Omega \times [0, T ]  \longrightarrow \mathbb{R}^d$ and a temperature field $\theta :  \Omega \times [0, T ] \longrightarrow \mathbb{R}$ such that

\begin{equation}\label{eq}
	\left\{
	\begin{aligned}
		&-\nabla\cdot\left(\bm{\sigma}(\bm{u})-\alpha\theta\bm{I}\right)=\bm{f},&&\quad\text{ in }\Omega\times(0,T] \\
		&\dot{\theta}-\nabla\cdot\left(\kappa\nabla\theta\right)+\alpha\nabla\cdot\dot{\bm{u}}=g,&&\quad\text{ in }\Omega\times(0,T],
	\end{aligned}
	\right.
\end{equation}
where the initial and boundary conditions are given by
\begin{equation}\label{bound}
	\left\{
	\begin{aligned}
		&\bm{u}=\bm{u}_{D},\quad\mathrm{on}\quad\mathcal{T}_{D}^{u}\times(0,T], 
		&&\quad(\sigma(\bm{u})-\alpha\theta\bm{I})\cdot\bm{n}=\sigma_{N},\quad\mathrm{on}\quad\mathcal{T}_{N}^{u}\times(0,T], \\
		&\theta=\theta_{D},\quad\mathrm{on}\quad\mathcal{T}_{D}^{\theta}\times(0,T],
		&&\quad(\kappa\nabla\theta(x,t))\cdot\bm{n}=0,\quad\mathrm{on }\quad \mathcal{T}_{N}^{\theta}\times(0,T], \\
		&\theta(x,0)=\theta_{0},\quad\mathrm{in}\quad \Omega. 
		&&\quad
	\end{aligned}
	\right.
\end{equation}

Here, $\mathcal{T}_D^u$ and $\mathcal{T}_N^u$ denote two disjoint segments of the boundary such that $\mathcal{T}^u  \coloneqq  \partial \Omega = \mathcal{T}_D^u \cup \mathcal{T}_N^u$. The segments $\mathcal{T}_D^\theta$ and $\mathcal{T}_N^\theta$  are defined similarly. $\bm{u} : \Omega \times [0, T]  \rightarrow \mathbb{R}^d$ denote the displacement field and $\theta : \Omega \times [0, T]  \rightarrow \mathbb{R}$ the temperature. 
The superscript dot represents partial differentiation concerning time $t$, and $\bm{x}$ denotes the space coordinates. $\kappa(\bm{x})$ and $\alpha(\bm{x})$ are the thermal conductivity and expansion coefficients with multiscale characteristics. $\bm{f}$ is the body force and $g$ is the heat source.

For a general thermoelastic material, the constitutive relation of stress and strain is given by 	

$$\sigma(\bm{u})=2\mu\epsilon(\bm{u})+\lambda\nabla\cdot\bm{u}\bm{I},\quad \mathrm{and} \quad\epsilon(\bm{u})=\frac12(\nabla\bm{u}+\nabla\bm{u}^T), $$
where $\lambda, \mu > 0$ are the Lam\'{e} coefficients, which can also be expressed in terms of Young?s modulus $E > 0$ and the Poisson ratio $\nu \in (-1,1/2)$

$$
\mu = \frac{E}{2(1+\nu)}, \quad \lambda = \frac{E\nu}{(1+\nu)(1-2\nu)}.
$$ 
Assume that the material parameters $\lambda, \mu, \kappa, \beta \in L^{\infty}(\Omega)$ satisfied 

$$
\begin{aligned}
	&\begin{aligned}
		& 0<\lambda_{\min } \coloneqq \underset{\bm{x} \in \Omega}{\operatorname{essinf}} \lambda(\bm{x}) \leq \underset{\bm{x} \in \Omega}{\operatorname{esssup}} \lambda(\bm{x}) \coloneqq \lambda_{\max }<\infty ,\\
		& 0<\mu_{\min } \coloneqq \underset{\bm{x} \in \Omega}{\operatorname{essinf}} \mu(\bm{x}) \leq \underset{\bm{x} \in \Omega}{\operatorname{essinf}} \mu(\bm{x})=\mu_{\max }<\infty ,\\
		& 0<\kappa_{\min } \coloneqq \underset{\bm{x} \in \Omega}{\operatorname{essinf}} \kappa(\bm{x}) \leq \underset{\bm{x} \in \Omega}{\operatorname{essinf}} \kappa(\bm{x}) \coloneqq \kappa_{\max }<\infty ,\\
		& 0<\beta_{\min } \coloneqq \underset{\bm{x} \in \Omega}{\operatorname{essinf}} \beta(\bm{x}) \leq \underset{\bm{x} \in \Omega}{\operatorname{esssup}} \beta(\bm{x}) \coloneqq \beta_{\max }<\infty.
	\end{aligned}
\end{aligned}
$$
Let $(\cdot, \cdot ) $ denote the inner product in $L^2(\Omega)$, $\norm{\cdot}$ for the corresponding norm, and : represent the Frobenius inner product.

Moreover, we denote $L^p(0,T;V)$ for the Bochner space with the norm 

$$\begin{aligned}&\norm{\bm{v}}_{L^{p}([0,T];V)}=\left(\int\limits_{0}^{T}\norm{\bm{v}}_{V}^{p}dt\right)^{\frac{1}{p}},\quad1\leq p<\infty,\\&\norm{\bm{v}}_{L^{\infty}([0,T];V)}=\underset{0\leq t\leq T}{\operatorname{essinf}}\norm{\bm{v}}_{V},\end{aligned}$$
where $(V,\norm{\cdot}_V)$ is a Banach space. Also, define $H^1(0,T;V)  \coloneqq  \{\bm{v} \in L^2(0,T;V):\partial_t \in L^2(0,T;V) \}$. To shorten notation, define the spaces for the displacement $\bm{u}$ and temperature $\theta$ by 
\begin{align*}
	&V_u(\Omega) \coloneqq \left\{\bm{v}\mid \bm{v}\in\left[{H}^1(\Omega)\right]^d,\bm{v}=0 ~ \mathrm{on}~ \mathcal{T}_D^u\right\},\\
	&V_\theta(\Omega) \coloneqq \left\{v\mid v\in {H}^1(\Omega),v=0~ \mathrm{on}~\mathcal{T}_D^\theta\right\},
\end{align*}
and 

$$
V(\Omega) \coloneqq V_u(\Omega)\times V_\theta(\Omega)=\{(\bm{v},\theta)\mid \bm{v} \in V_u(\Omega), \theta \in V_\theta(\Omega)\}.$$

Then we obtain the following variational problem: to find weak solutions $\bm{u} \in V_u(\Omega)$ and $\theta \in V_\theta(\Omega)$ such that

\begin{equation}\label{variation}\begin{cases}a(\bm{u},\bm{v}_u)-b(\bm{v}_u,\theta)=\langle\bm{f},\bm{v}_u\rangle,&\forall\bm{v}_u\in V_u(\Omega),\\[2ex]c(\dot{\theta},v_\theta)+d(\theta,v_\theta)+b(\dot{\bm{u}},v_\theta)=\langle g,v_\theta\rangle ,&\forall v_\theta\in V_\theta(\Omega),\end{cases}\end{equation}
where

$$
\begin{aligned} a(\bm{u},\bm{v}_{u})=\int\limits_{\Omega}\sigma(\bm{u}):\epsilon(\bm{v}_{u}),\quad b(\bm{v}_{u},\theta)=\int\limits_{\Omega}\alpha\theta\nabla\cdot\bm{v}_{u},
	\\c(\theta,v_{\theta})=\int\limits_{\Omega}\alpha\theta v_{\theta},\quad d(\theta,v_{\theta})=\int\limits_{\Omega}\kappa\nabla\theta\cdot\nabla v_{\theta}.
\end{aligned}$$
The proof of existence and uniqueness of solutions $\bm{u}$ and $\theta$ to (\ref{variation}) can be found in \cite{coupleFEM}.

To discretize the variational problem (\ref{variation}), let $\mathcal{T}^h$ be a conforming partition for the computational domain $\Omega$ with local grid sizes $h_K \coloneqq  \Diam(K)$ and $h \coloneqq \max_{K \in \mathcal{T}^h}h_K$, where $K \in \mathcal{T}^h$ is a closed subset (of the domain$\Omega$) with a nonempty interior and a piecewise smooth boundary. Here, $\mathcal{T}^h$ is remarked to refer to the $fine$ $ grid$. Next, let $V_{u,h}$ and $V_{\theta,h}$ be the classical $P_1$ finite element spaces with respect to the fine grid $\mathcal{T}^h$, i.e.,

$$\begin{aligned}
	V_{u,h} \coloneqq \{\bm{v}\in (C^{0}(\bar{\Omega}))^d\mid \bm{v}|_{K} \mathrm{is~a~polynomial~of~degree} \leq1 , \  \forall K\in\mathcal{T}^{h}\},
	\\V_{\theta,h} \coloneqq \{v\in C^{0}(\bar{\Omega})\mid v|_{K} \mathrm{is~a~polynomial~of~degree} \leq1, \  \forall K\in\mathcal{T}^h\}.\end{aligned}$$
Denote 

$$V_h \coloneqq V_{u,h}\times V_{\theta,h}=\{ (\bm{v}_1,v_2)\mid \bm{v}_1 \in V_{u,h},\ v_2\in V_{\theta,h}\},$$
and

$$V_H \coloneqq V_{u,H}\times V_{\theta,H}=\{ (\bm{v}_1,v_2)\mid \bm{v}_1 \in V_{u,H},\ v_2\in V_{\theta,H}\}.$$

For time discretization, let $\tau$ be a uniform time step and define $t^n = n\tau $ for $n=0,1,\cdots ,N$ and $T = N\tau$. Then, the finite element method with a backward Euler scheme in time reads, to find $\bm{u}_h \in V_{u,h}$ and $\theta_h \in V_{\theta,h}$ such that 
\begin{equation}\label{eq:discrete}\begin{cases}
		a(\bm{u}_{h}^{n},\bm{v}_u)-b(\bm{v}_u,\theta_{h}^{n})=(f^{n},\bm{v}_u),\quad \forall \bm{v}_u \in V_{u,h},
		\\c(D_{\tau}\bm{u}_{h}^{n},v_\theta)+d(\theta_{h}^{n},v_\theta)+b(D_{\tau}\theta_{h}^{n},v_\theta)=(g^{n},v_\theta),\quad \forall v_\theta \in V_{\theta,h},
\end{cases}\end{equation}
where $D_\tau \theta_h^n  \coloneqq (\theta_h^n-\theta_h^{n-1})/\tau $ and similarly to $D_\tau \bm{u}_h^n$. The right hand sides are evaluated at time $t^n$, that is, $\bm{f}^n  \coloneqq \bm{f}(t^n)$ and $g^n  \coloneqq  g(t^n)$. Given initial data $\theta_h^0$ the system Eq.(\ref{eq:discrete}) is well-proposed \cite{error_couple}. Assume that $\theta_h^0 \in V_{\theta,h}$ is a suitable approximation of $\theta_0$. Note that the first equation of (\ref{variation}) is used to define a consistent initial value $\bm{u}^0  \coloneqq  \bm{u}(0,\cdot) \in V_u$, that is, $\bm{u}^0$ satisfies the equation

$$
a(\bm{u}^0,\bm{v}_u)-b(\bm{v}_u,\theta^0) = (\bm{f}^0,\bm{v}_u), \quad \forall \bm{v}_u \in V_u,
$$
and define $\bm{u}_h^0 \in V_{u,h},$ to be the solution to

\begin{equation}
	a(\bm{u}_h^0,\bm{v}_u) -b(\bm{v}_u,\theta_h^0) = (\bm{f}^0,\bm{v}_u), \quad \forall \bm{v}_u \in V_{u,h}.
\end{equation}
Let $\bm{w}^n = (\bm{u}_h^n, \theta_h^n)$ be the solutions at the $n_{th}$ time  level $t^n $. Then,  following the standard finite element discretization Eq.(\ref{eq:discrete}) can be rewritten as follows

$$A \bm{\bm{w}}^n=B \bm{\bm{w}}^{n-1}+ F^n,$$
where
\begin{equation}A=\begin{bmatrix}A_1&-A_2\\A_3&M^{\prime}+\tau A_4\end{bmatrix},B=\begin{bmatrix}0&0\\A_3&M^{\prime}\end{bmatrix},F^n=\begin{bmatrix}F^n\\\tau G^n\end{bmatrix},
	\label{eq:iterationmatrix}\end{equation}
and 

$$\begin{aligned}&(A_1)_{j_1j_2}=a(\phi_{j_1,u},\ \phi_{j_2,u}), \ (A_2)_{j_1j_2}=b(\phi_{j_1,u},\phi_{j_2,\theta}),\ A_3=A_2^T,\\&(A_4)_{j_1j_2}=d(\phi_{j_1,\theta},\ \phi_{j_2,\theta}), \ (M^{\prime})_{j_1j_2}=c(\phi_{j_1,\theta},\phi_{j_2,\theta}),\\&(F^n)_{j_1}=({\bm{f}}^n,\phi_{j_1,u}),\ (G^n)_{j_2}=(g^n,\phi_{j_2,\theta}),\end{aligned}$$
where $\phi_{j,u}$ are the $j_{th}$ basis function of $V_u$ and similar to $\phi_{j,\theta}$.

Since the classical finite element method approach will serve as a reference solution, this research aims to construct a reduced system based on  Eq.(\ref{eq:discrete}). To this end, a finite-dimensional multiscale space $V_\mathrm{ms} \subseteq \left[H_0^1(\Omega)\right]^d$, whose dimensions are much smaller, will be introduced for approximating the solution on some feasible coarse grid.
\section{Construction of multiscale space}
\label{sec:MELOD}

In this section, we shall introduce a better way to construct the local orthogonal decomposition multiscale spaces than ever before for the multiscale problem under consideration. Subsection \ref{subsec:OD} introduces a quasi-interpolation operator used in the construction of the new basis and defines a modified nodal basis. This basis is then localized in Subsection \ref{subsec:localization}. Subsection \ref{subsec:implementation} introduce the algorithm implementation of the ME-LOD method.

\subsection{Coupled orthogonal decomposition}
\label{subsec:OD}
The classical FEM solution $u_H$ in the coarse space $V_H$ can not accurately approximate $u$. However, it is cheaper to compute than $u_h$ since $\dim(V_H ) \ll \dim(V_h)$. The aim is now to define a new multiscale space $V_\mathrm{ms}$ with the same dimension as the coarse space $V_H$ but with better approximation properties. To this end, the LOD method starts with several macroscopic quantities of interest, which extract the desired information from the exact solution \cite{num_homo}. Let $\mathcal{N}$ denote the set of interior vertices of $\mathcal{T}^H$, with $\mid\mathcal{N}\mid = N^v$. For each vertex $x_m \in \mathcal{N}$, where $m = 1, 2, \dots, N^v$, let $\bm{\lambda}_m \in V_H$ denote the associated nodal basis function (tent function). Correspondingly, let $\bm{\lambda}_{m,u}$ and $\bm{\lambda}_{m,\theta}$ denote the basis functions in $V_{H,u}$ and $V_{H,\theta}$, respectively. The continuous linear functionals are denoted as $q_{j,u} \in \left[V_{u,h}(\Omega)\right]^\ast$ and $q_{j,\theta} \in \left[V_{\theta,h}(\Omega)\right]^\ast,$ $j \in J$, where $J$ is the finite index set with $x_j \in \mathcal{N}$.
Without loss of generality, we assume that these functionals are linearly independent. A canonical choice of functional $q_{j,u}$ and $q_{j,\theta}$ are

$$q_{j,u}  \coloneqq  (\bm{\lambda}_{j,u} , \cdot)_{L^2(\Omega) },\ q_{j,\theta}  \coloneqq  (\bm{\lambda}_{j,\theta} , \cdot)_{L^2(\Omega) }.  $$

Now define the kernels of $q_{j,u}$ and $q_{j,\theta}$ 

$$
V_{\mathrm{f},u} \coloneqq \left\{\bm{v} \in V_{h,u}| q_{j,u} (\bm{v})=0, \forall j \in J\right\}, V_{\mathrm{f},\theta} \coloneqq \left\{v \in V_{h,\theta}| q_{j,\theta} (v)=0 , \forall j \in J\right\}.
$$
The kernels are fine-scale spaces in the sense that they contain all features that are not captured by the (coarse) finite element spaces $V_{H,u}$ and $V_{H,\theta}$. Note that the interpolation leads to the splits $V_{h,u}=V_{H,u} \oplus V_{\mathrm{f},u}$ and $V_{h,\theta}=V_{H,\theta} \oplus V_{\mathrm{f},\theta}$, which means any function $\bm{v}_u \in V_{h,u}$ can be uniquely decomposed as $\bm{v}_u=\bm{v}_{H,u}+\bm{v}_{\mathrm{f},u}$, with $\bm{v}_{H,u} \in V_{H,u}$ and $\bm{v}_{\mathrm{f},u} \in V_{\mathrm{f},u}$, and similarly to $\bm{v}_\theta \in V_{h,\theta}$.

Here, we introduce a Ritz projection onto the fine-scale spaces $\mathcal{C}: V_{h,u} \times V_{h,\theta} \rightarrow V_{\mathrm{f},u} \times V_{\mathrm{f},\theta}$, such that for all $\left(\bm{v}_u, v_\theta\right) \in V_{h,u} \times V_{h,\theta}$, $\mathcal{C}\left(\bm{v}_u, v_\theta\right)=\left(\mathcal{C}_u \bm{v}_u, \mathcal{C}_\theta v_\theta\right)$ fulfills

$$
\begin{array}{ll}
	\left(\sigma\left(\bm{v}_u-\mathcal{C}_u \bm{v}_u\right): \varepsilon\left(\bm{w}_u\right)\right)=0, & \forall \bm{w}_u \in V_{\mathrm{f},u}, \\
	\left(\kappa \nabla\left(v_\theta-\mathcal{C}_\theta v_\theta\right), \nabla w_\theta\right)=0, & \forall w_\theta \in V_{\mathrm{f},\theta} .
\end{array}
$$
Note that this is an uncoupled system and $\mathcal{C}_u$ and $\mathcal{C}_\theta$ are classical Ritz projections.
For any $\left(\bm{v}_u, v_\theta\right) \in V_{h,u} \times V_{h,\theta}$ we have, due to the splits of the spaces $V_{h,u}$ and $V_{h,\theta}$ above, that

$$
\bm{v}_u-\mathcal{C}_u \bm{v}_u=\bm{v}_{H,u}-\mathcal{C}_u \bm{v}_{H,u}, \quad v_\theta-\mathcal{C}_\theta v_\theta=v_{H,\theta}-\mathcal{C}_\theta v_{H,\theta}.
$$
Hence, we can define the multiscale spaces

$$
V_{\mathrm{ms},u} \coloneqq \left\{\bm{v}-\mathcal{C}_u \bm{v}\mid \bm{v} \in V_{H,u}\right\}, \quad V_{\mathrm{ms},\theta} \coloneqq \left\{v-\mathcal{C}_\theta v\mid  v \in V_{H,\theta}\right\} .
$$
Clearly $V_{\mathrm{ms},u} \times V_{\mathrm{ms},\theta}$ has the same dimension as $V_{H,u} \times V_{H,\theta}$. Indeed, with $\bm{\lambda}_{m_1, u}$ denoting the hat function in $V_{H,u}$ at node $x_{m_1}$ and $\bm{\lambda}_{m_2, \theta}$ the hat function in $V_{H,\theta}$ at node $x_{m_2}$, such that a basis for $V_{\mathrm{ms},u} \times V_{\mathrm{ms},\theta}$ is given by

$$
\left\{\left(\bm{\lambda}_{m_1, u}-\mathcal{C}_u \bm{\lambda}_{m_1, u}, 0\right),\left(0, \bm{\lambda}_{m_2, \theta}-\mathcal{C}_\theta \bm{\lambda}_{m_2, \theta}\right)\mid (x_{m_1}, x_{m_2})\in \mathcal{N}\right\}.
$$
We also note that the splits $V_{h,u}=V_{\mathrm{ms},u} \oplus V_{\mathrm{f},u}$ and $V_{h,\theta}=V_{\mathrm{ms},\theta} \oplus V_{\mathrm{f},\theta}$ hold, which fulfill the following orthogonality relation

$$
\begin{aligned}
	\left( \sigma(\bm{v}_u): \varepsilon\left(w_u\right)\right)=0, & \forall \bm{v}_u \in V_{\mathrm{ms},u}, w_u \in V_{\mathrm{f},u}, \\
	\left(\kappa \nabla v_\theta, \nabla w_\theta\right)=0, & \forall v_\theta \in V_{\mathrm{ms},\theta}, w_\theta \in V_{\mathrm{f},\theta}.
\end{aligned}
$$

The core idea of the LOD method is to find a function space orthogonal to the given kernel space $V_\mathrm{f}$ in the sense of $a(\cdot, \cdot)$, which is a sesquilinear form corresponding to a second-order elliptic differential operator. 
In prevailing methodologies, basis function design commonly relies on decoupling techniques, wherein the basis functions are constructed independently of the coupling terms and are therefore incapable of representing coupling effects. Although auxiliary approaches can be employed to supplement coupling information, a more desirable objective is to embed such information directly within the basis functions.

However, the explicit incorporation of coupling terms during basis function construction is generally inadvisable, as the non-regularities they induce render classical analytical techniques unsuitable. To overcome this limitation, we introduce a novel regularization framework that integrates two relaxation coefficients, enabling the systematic inclusion of coupling effects while preserving computational tractability.
Here, we adopt a coupled approach to construct multiscale basis functions. Consider the following static differential operator
\begin{subequations}
	\begin{align}
		L_\gamma(\bm{u},\theta) \coloneqq \left(-\nabla\cdot(\sigma(\bm{u})-\gamma_1\alpha\theta\bm{I}),-\nabla\cdot\left(\kappa\nabla\theta\right)+\gamma_2\alpha\nabla\cdot\bm{u}\right), \label{coperator_gamma}\\
		\intertext{with corresponding variational form}
		l_\gamma\left(\bm{w},\bm{v}\right) = a(\bm{u},\bm{v}_u)-\gamma_1 b(\bm{v}_u,\theta_{h})+d(\theta,v_\theta)+\gamma_2 b(\theta,v_\theta),\label{var4coperator_gamma}
	\end{align}
\end{subequations}
where $\bm{w} \coloneqq (\bm{u},\theta)$, $\bm{v}\coloneqq  (\bm{\bm{v}_u},v_\theta) \in V.$
We define the coupled functional 

$$q_j \coloneqq (\bm{\lambda}_j,\cdot),$$
which leads to the kernel space $V_\mathrm{f} \coloneqq \left\{\bm{v} \in V_{h}\mid q_{j} (\bm{v})=0, \ \forall  j \in J\right\}$. Then the coupled Riesz projection $\mathcal{\tilde{C}}$ can be defined as $\mathcal{\tilde{C}} : V \longrightarrow V_\mathrm{f}$ such that 

$$ l_\gamma(\bm{w}-\mathcal{\tilde{C}}\bm{w}, \bm{v}) = 0, \  \forall \bm{w} \in V, \bm{v} \in V_\mathrm{f}.$$
Hence, the fine-scale projection operator $\mathcal{\tilde{C}}$ leads to an orthogonal splitting to the scalar product $l_\gamma$: 

$$ V = V_\mathrm{ms}  \oplus V_\mathrm{f} .$$
Any function $\bm{u} \in V $ can be decomposed into $\bm{u}_\mathrm{ms}  \in V_\mathrm{ms} $ and $ \bm{u}_\mathrm{f} \in V_\mathrm{f} ,  \bm{u} = \bm{u}_\mathrm{ms}  + \bm{u}_\mathrm{f}$, with $l_\gamma(\bm{u}_\mathrm{ms} , \bm{u}_\mathrm{f} ) = 0$. Since $\dim V_\mathrm{ms} = \dim V_H$, the space $V_\mathrm{ms}$ can be regarded as a modified coarse space. Replacing $V$ with $V_\mathrm{ms}$, we can now propose a new multiscale finite element method, that is, to find $\bm{u}_\mathrm{ms} \in V_\mathrm{ms}$ that satisfies
\begin{equation}
	\label{eq:ms_galerkin}
	\begin{cases}
		a(\bm{u}_\mathrm{ms}^{n},\bm{v}_u)-b(\bm{v}_u,\theta_\mathrm{ms}^{n})=(f^{n},\bm{v}_u),
		\\c(D_{\tau}\bm{u}_\mathrm{ms}^{n},v_\theta)+d(\theta_{h}^{n},v_\theta)+b(D_{\tau}\theta_\mathrm{ms}^{n},v_\theta)=(g^{n},v_\theta),\quad \forall (\bm{v}_u,v_\theta) \in V_\mathrm{ms}.
	\end{cases}
\end{equation}
Finally, we introduce a basis of $V_\mathrm{ms}$. The image of the nodal basis function $\bm{\lambda}_m$ under the fine-scale projection $\mathcal{\tilde{C}}$ is denoted by $\bm{\phi}_m=\mathcal{\tilde{C}} \bm{\lambda}_m \in V_{\mathrm{f}}$, i.e., $\bm{\phi}_m$ satisfies the corrector problem

$$
l_\gamma\left(\bm{\lambda}_m-\bm{\phi}_m, \bm{\bm{v}}\right)=0 \quad \forall \quad \bm{\bm{v}} \in V_\mathrm{f}.
$$
We emphasize that the corrector problem is posed in the fine-scale space $V_\mathrm{f}$, that is, the test and trial functions satisfy the constraint that their interpolation with respect to the coarse mesh vanishes. Then, a basis of $V_\mathrm{ms}$ is given by the modified nodal basis

$$
\left\{\bm{\psi}_m =I_\mathrm{ms}\bm{\lambda}_m = \bm{\lambda}_m-\bm{\phi}_m \mid m \in \mathcal{N}\right\},
$$ 
where $I_\mathrm{ms} \coloneqq I-\mathcal{\tilde{C}}$.
In general, the corrections $\bm{\psi}_m$ of the nodal basis functions $\bm{\lambda}_m$ have global support, a fact that limits the practical use of the modified basis and the corresponding method.
\subsection{Localization}
\label{subsec:localization}
By localization of the operator $I_\mathrm{ms}$, it is possible to deduce a practically feasible variant of the method Eq.(\ref{eq:ms_galerkin}). In \cite{LOD}, it is proved that the corrections decay exponentially, and a localization procedure is proposed. 

Before introducing ME-LOD, we start with a localization process, analogous to the technique adopted in most multiscale methods. Define patches of size $k$ in the following way, for $K \in \mathcal{T}_H$

$$
\begin{aligned}
	& \omega_0(K) \coloneqq \operatorname{int} K, \\
	& \omega_k(K) \coloneqq \operatorname{int}\left(\cup\left\{\hat{K} \in \mathcal{T}_H\mid \hat{K} \cap \overline{\omega_{k-1}(K)} \neq \emptyset\right\}\right), \quad k=1,2, \ldots
\end{aligned}
$$
and let $V_{\mathrm{f}}\left(\omega_k(K)\right) \coloneqq \left\{\bm{v} \in V_{\mathrm{f}}\mid v|_{\bar{\Omega} \backslash \omega_k(K)}=0\right\}$ be the restriction of $V_{\mathrm{f}}$ to the patch $\omega_k(K)$. Fig. \ref{fig:mesh} depicts the fine grid $\mathcal{T}_H$, the coarse grid $\mathcal{T}_H$ , the coarse element $K$ and neighborhood $\omega_2(K)$ of K. The solutions $\bm{\phi}_m^{k} \in V_\mathrm{f} (\omega_k(K))$ of  

$$
l_\gamma^{m,k}\left(\bm{\lambda}_m-\bm{\phi}_m^{k}, \bm{v}\right)=0, \quad \forall \bm{v} \in V_{\mathrm{f}}\left(\omega_k({K})\right), \quad {K} \in \mathcal{T}_H.
$$ 
are approximations of $\bm{\phi}_m$ with local support, where $l_\gamma^{m,k}$ is the the restrict of $l_\gamma$ to region $\omega_k(K)$. 
\begin{figure}
	\centering
	\includegraphics[scale=0.5]{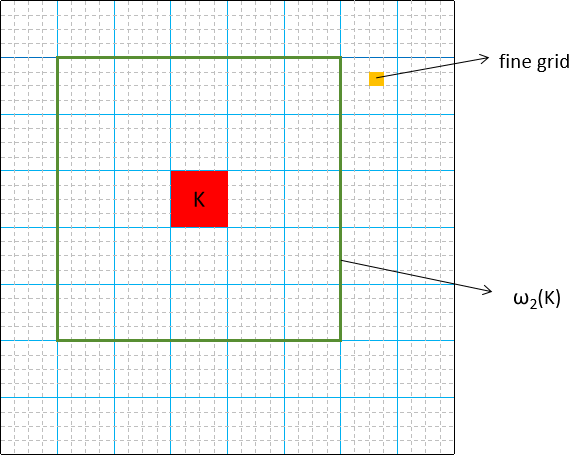}
	\caption{The fine grid $\Gamma_h$, the coarse grid $\Gamma_H$ , the coarse element K and neighborhood $\omega_2(K)$ of K }
	\label{fig:mesh}
\end{figure}
We can now define the localized multiscale space 

$$
V_\mathrm{ms}^{m,k}= \operatorname{span} \left\{\bm{\lambda}_m-\bm{\phi}_m^{k}\mid m \in \mathcal{N}\right\}, 
$$ 
and the localized operator $I_\mathrm{ms}^{m,k}:V_H\mapsto V_\mathrm{ms}^{m,k}.$

Since the dimension of $V_{\mathrm{f}}\left(\omega_k(K)\right)$ can be made significantly smaller than the dimension of $V_{\mathrm{f}}$ (depending on $k$), the problem of finding $I_{\mathrm{ms}}^{m,k} \bm{\lambda}_m$ is computationally cheaper than finding $I_{\mathrm{ms}}\bm{\lambda}_m$. Moreover, the resulting discrete system is sparse. It should also be noted that the computation of $I_{\mathrm{ms}}^{m,k} \bm{\lambda}_m$ for all nodes is suitable for parallelization since they are independent.
%
\subsection{Implementation of ME-LOD}
\label{subsec:implementation}
In this section, we focus on the numerical implementation of ME-LOD. As mentioned earlier, the construction of a local basis of the multiscale spaces is crucial. Our construction is based on a discrete subspace $V_H(\omega_k({K})) \subset V_h(\omega_k({K}))$ of dimension $N$, which has a local basis and satisfies 

$$I_\mathrm{ms}^{m,k}V_H(\omega_k(K)) = V_\mathrm{ms}^{m,k}.$$

The basis of $V_\mathrm{ms}^{m,k}$ can be obtained by two sets of saddle point problems. For $m \in J$, find $\bm{\psi}_m \in V(\omega_k(K))$ and $\tilde{\lambda} \in \mathbb{C}^N$ such that

$$
\begin{aligned}
	l_\gamma\left(\bm{\psi}_m, \bm{v}\right)+\sum_{j \in J} \tilde{\lambda}_j q_j(\bm{v})=0,\  & \forall \bm{v} \in V(\omega_k(K)), \\
	q_j\left(\bm{\psi}_m\right)=\delta_{j m},\  & \forall j \in J.
\end{aligned}
$$
The proof of this conclusion can be found in \cite{num_homo}.

Hence, the ME-LOD multiscale basis $\Psi_m$ can be calculated by the following equations

$$
\left[\begin{array}{cc}
	A&M\\
	M^T&O\\
\end{array}\right]
\left[\begin{array}{c}
	\Psi_m\\
	\Lambda\\
\end{array}\right]
=\left[\begin{array}{c}
	O\\
	I_j\\
\end{array}\right],
$$
where $A_j,M_j$ are the stiffness matrix and mass matrix defined on $\omega_k(K)$, $\Lambda = [ \tilde{\lambda}_1, \tilde{\lambda}_2,\cdots,  \tilde{\lambda}_J(m) ]^T$ and $I_j$ is a sparse vector taking 1 in the $j_{th}$ row. Like all classic multiscale methods, one can use the representation of multiscale basis functions via fine-scale basis functions to assemble the stiffness matrix. This is particularly useful in code development. Assume that multiscale basis function (in discrete form) $\psi_i$ can be written as 

$$\psi_i=r_{ij}\phi_j,$$
where $R = (r_{ij} )$ is a matrix and $\phi_j\in V_h$ are fine-scale finite element basis functions. The $i_{th}$ row of this matrix contains the fine-scale representation of the $i_{th}$ multiscale basis function. Here, it should be noted that the matrix $R$ only needs to be constructed once, and it can be repeatedly used for computation.
Define

$$
A_c^n=R^T A^n R, B^c=R^T BR, \mathbf{w}_c^n=\left(R^T R\right)^{-1} R^T \mathbf{w}^n, F_c^n= R^T F^n,
$$
and combining with Eq.(\ref{eq:iterationmatrix}), the following algebraic system can be given,

$$
A_c^n \mathbf{w}_c^n=B^c \mathbf{w}_c^{n-1}+F_c^n.
$$

Then the solution $\mathbf{w}_c^n$ can be calculated by iteration, and the solutions in fine grid can be obtained by use of the coupling multiscale basis functions,

$$
\mathbf{w}^n=R \mathbf{w}_c^n.
$$
Here, the key difference between ME-LOD and LOD is the construction of more accurate coupling multiscale basis functions. The matrix $R$ can be computed offline, and it can be reused in all time steps. Thus, a much smaller system is solved with ME-LOD, which can significantly improve computational efficiency.
\section{Convergence analysis}
\label{sec:error}
\subsection{Interpolation error}
Peterseim et al. present a unified variational framework for the LOD method of some general classes of linear PDEs in divergence form in \cite{num_homo}. The framework merges different concepts, which were successfully applied to multiscale problems. Therefore, this section wants to illustrate that the entire process of constructing multi-scale basis functions through auxiliary problems satisfies the abstract framework proposed in the article.

Notice that the $b(\cdot, \cdot)$ is continuous in $V_u(\Omega) \times L^2(\Omega)$, and it can be assumed that there exists a constant $C_0$ such that for all $\bm{u} \in V_u(\Omega), \theta \in L_2(\Omega)$, we have

$$
b^K(\bm{u}, \theta) \leq C_0\norm{\bm{u}}_{L_a, K}\norm{\theta}_{L_d, K}, \quad \forall K \in \mathcal{T}_H,$$
where

$$
\norm{\theta}_{L_d, K}^2=\int_K \kappa \theta^2.
$$
Also, define

$$
\norm{\bm{u}}_{L_a, K}^2=\int_K(\lambda+2 \mu) \bm{u} \cdot \bm{u}.
$$

For all $\bm{u} \in V_u(\Omega), \theta \in V_\theta(\Omega)$, the energy norms can be defined as follows

$$
\norm{\bm{u}}_{a, K}^2=a^K(\bm{u}, \bm{u}), \quad\norm{\theta}_{d, K}^2=d^K(\theta, \theta).
$$

The localized corrector $\tilde{\bm{w}} = I^{m,k}\bm{w} \in V_{\mathrm{ms}}^{m,k}$ can be defined as the solution to the problem:
\[
l_\gamma^{m,k}(\bm{u},\bm{v}) = l_\gamma^{m,k}(\bm{w},\bm{v}), \quad \forall \bm{v} \in V_{\mathrm{ms}}^{m,k}.
\]
To justify the validity and convergence of this construction, we refer to the general framework established in \cite{num_homo}, which proves numerical convergence and exponential decay of correctors under some assumptions: the domain is a bounded Lipschitz domain;  the test and trial spaces are closed subspaces of a Hilbert space;  the bilinear form is bounded and satisfies an inf-sup condition; the right-hand side is a continuous linear functional. These conditions are fully satisfied in our setting when assuming that there exists $\gamma_1,\gamma_2 \in (0,1]$ which makes $l_{\gamma}(\bm{u},\bm{v})$ satisfy. 
\begin{equation}
	\label{eq:inf_sup}
	0 < \alpha_{V}=\inf _{\substack{\bm{v} \in V \\ \bm{v} \neq 0}} \sup _{\substack{\bm{w} \in V \\ \bm{w} \neq 0}} \frac{|l_{\gamma}(\bm{u},\bm{v})|}{\norm{\bm{v}}_{V_u}\norm{\bm{w}}_{V_\theta}}=\inf _{\substack{\bm{w} \in V \\ \bm{w} \neq 0}} \sup _{\substack{\bm{v} \in V \\ \bm{v} \neq 0}} \frac{|l_{\gamma}(\bm{u},\bm{v})|}{\norm{\bm{v}}_{V_u}\norm{\bm{w}}_{V_\theta}}.
\end{equation}

Hence, problem $\ref{variation}$ is well-posed, admits the stability estimate and fits into the abstract framework of \cite{num_homo}.

Therefore, the convergence and localization results in \cite{GMsFEM} apply directly to our multiscale space construction.

\begin{equation}
	\norm{\bm{w}-\tilde{\bm{w}}}_{l_\gamma^{m,k}} \lesssim C(H,k)\norm{L_\gamma^{m,k}(\bm{w}) }_{L^2(\omega_k(K))},
\end{equation}
where $C(H,k)=\left( H+H^{-2} \exp(-ck) \right)$. 

The quality of function approximation in the multiscale space improves with an increasing number of oversampling layers $k$, but the benefit saturates when it becomes sufficiently large. This holds for both the energy and $L^2$-norms.
\begin{lemma} For all $ \bm{w} \in V(\omega_k(K)), \tilde{\bm{w}} = I^{m,k} \bm{w}\in V_\mathrm{ms}^{m,k},$
	\begin{equation}
		\label{eq:lem1}
		\norm{\bm{w}-\tilde{\bm{w}}}_{L^2(\omega_k(K))} \lesssim C^2(H,k)\norm{L_\gamma^{m,k}(\bm{w}) }_{L^2(\omega_k(K))} ,
	\end{equation}
	where $C(H,k)=H+H^{-2}\exp(-ck)$.
\end{lemma}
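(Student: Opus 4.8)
The plan is to upgrade the energy-norm estimate stated just above the lemma to an $L^2$-norm estimate by a duality (Aubin--Nitsche) argument, which is precisely what accounts for the squaring of the factor $C(H,k)$. Write $\bm{e} \coloneqq \bm{w}-\tilde{\bm{w}}$ for the projection error; by construction $\tilde{\bm{w}} = I^{m,k}\bm{w}$ is the $l_\gamma^{m,k}$-Galerkin projection of $\bm{w}$ onto $V_\mathrm{ms}^{m,k}$, so $\bm{e}$ satisfies the Galerkin orthogonality $l_\gamma^{m,k}(\bm{e},\bm{v})=0$ for every $\bm{v}\in V_\mathrm{ms}^{m,k}$. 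To reach the $L^2$-norm I would introduce the dual problem on the patch: find $\bm{z}\in V(\omega_k(K))$ such that $l_\gamma^{m,k}(\bm{v},\bm{z}) = (\bm{v},\bm{e})$ for all $\bm{v}\in V(\omega_k(K))$. The two-sided (symmetric) form of the inf--sup condition \eqref{eq:inf_sup}, i.e. the equality of the inf--sup and sup--inf constants, is exactly what guarantees that this transposed problem is well posed on the patch with the same stability constant $\alpha_V$.

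Testing the dual problem with $\bm{v}=\bm{e}$ gives $\norm{\bm{e}}_{L^2(\omega_k(K))}^2 = l_\gamma^{m,k}(\bm{e},\bm{z})$. Since any approximant $\tilde{\bm{z}}\in V_\mathrm{ms}^{m,k}$ is annihilated by the primal orthogonality, $l_\gamma^{m,k}(\bm{e},\tilde{\bm{z}})=0$, so I may subtract it freely and then invoke continuity of the bilinear form with respect to the energy norm $\norm{\cdot}_{l_\gamma^{m,k}}$:
\[
\norm{\bm{e}}_{L^2(\omega_k(K))}^2 = l_\gamma^{m,k}(\bm{e},\bm{z}-\tilde{\bm{z}}) \lesssim \norm{\bm{e}}_{l_\gamma^{m,k}}\,\norm{\bm{z}-\tilde{\bm{z}}}_{l_\gamma^{m,k}}.
\]
The natural choice here is to take $\tilde{\bm{z}}$ to be the adjoint Galerkin projection of $\bm{z}$ onto $V_\mathrm{ms}^{m,k}$.

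It then remains to estimate the two energy-norm factors separately. For the first, I apply the energy estimate quoted before the lemma to $\bm{w}$, giving $\norm{\bm{e}}_{l_\gamma^{m,k}} \lesssim C(H,k)\norm{L_\gamma^{m,k}(\bm{w})}_{L^2(\omega_k(K))}$. For the second, I apply the same estimate to the dual problem: because $\bm{z}$ solves the transposed equation, its strong residual is exactly $\bm{e}$, so $\norm{\bm{z}-\tilde{\bm{z}}}_{l_\gamma^{m,k}} \lesssim C(H,k)\norm{\bm{e}}_{L^2(\omega_k(K))}$. Inserting both bounds and dividing through by $\norm{\bm{e}}_{L^2(\omega_k(K))}$ (the claim being trivial when it vanishes) yields the asserted estimate with the squared constant $C^2(H,k)$.

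I expect the main obstacle to lie in the second factor: because $l_\gamma^{m,k}$ is non-symmetric (the coupling terms $-\gamma_1 b$ and $\gamma_2 b$ enter asymmetrically), the energy estimate cited above must be shown to hold verbatim for the adjoint operator $(L_\gamma^{m,k})^{\ast}$ and its Galerkin projection, rather than merely with an $O(1)$ constant. This is where the two-sided condition \eqref{eq:inf_sup} does the real work: since it supplies the same stability constant for the transposed form, the exponential-decay and localization analysis of \cite{num_homo} transfers directly, and the dual error genuinely inherits the factor $C(H,k)$. A secondary technical point is to confirm that the dual residual can be identified with $\bm{e}$ in $L^2(\omega_k(K))$ (equivalently, that the dual solution is regular enough on the patch), which again follows from the well-posedness granted by \eqref{eq:inf_sup}.
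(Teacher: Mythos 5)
Your proposal follows essentially the same route as the paper's proof: an Aubin--Nitsche duality argument in which a dual problem with right-hand side $\bm{w}-\tilde{\bm{w}}$ is introduced, Galerkin orthogonality removes the multiscale projection of the dual solution, continuity gives the product of two energy norms, and the energy estimate is applied once to the primal error and once to the dual error to produce the factor $C^2(H,k)$. If anything, you are more careful than the paper, which silently applies the (primal) energy estimate to the dual solution; your explicit appeal to the adjoint Galerkin projection and the two-sided inf--sup condition \eqref{eq:inf_sup} is precisely what justifies that step for the non-symmetric form $l_\gamma^{m,k}$.
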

\begin{proof}
	
	Consider a differential operator $L_\gamma^*$ with 
	
	$$
	l_\gamma^{m,k}(\bm{u},\bm{v})= \left(L_\gamma(\bm{u}),\bm{v}\right)=\left(\bm{u},L_\gamma^*(\bm{v})\right).
	$$
	The problem
	\begin{equation}\label{eq:dual}\begin{cases}L_\gamma^*(\bm{u})=\bm{w}-\tilde{\bm{w}},&\mathrm{in }\quad\Omega,\\\bm{u}=0,&\mathrm{on }\quad\partial\Omega,\end{cases}		\end{equation}
	has the week formulation $l_\gamma^{m,k}(\bm{v},\bm{u})=(\bm{w}-\tilde{\bm{w}},\bm{v}),\  \forall \bm{v} \in V_\mathrm{ms}$. Then, using Nitsche?s trick:
	
	$$
	\begin{aligned}
		\norm{\bm{w}-\tilde{\bm{w}}}_{L^2(D)}^2=&\ (\bm{w}-\tilde{\bm{w}},\bm{w}-\tilde{\bm{w}})=\ l_\gamma^{m,k}(\bm{w}-\tilde{\bm{w}},\bm{u})\\
		=&\ l_\gamma^{m,k}(\bm{w}-\tilde{\bm{w}},\bm{u}-I_\mathrm{ms} \bm{u})+l_\gamma^{m,k}(\bm{w}-\tilde{\bm{w}},I_\mathrm{ms} \bm{u} )\\
		\leq &\ \norm{\bm{w}-\tilde{\bm{w}}}_{l_\gamma^{m,k}}\norm{\bm{u}-I_\mathrm{ms} \bm{u}}_{l_\gamma^{m,k}}\\
		\lesssim &\ C^2(H,k)\norm{L_\gamma^{m,k}(\bm{w})}_{L^2(\omega_k(K))}\norm{\bm{w}-\tilde{\bm{w}}}_{L^2(\Omega)}.
	\end{aligned}
	$$
\end{proof}
Hence, the error estimation of the multiscale interpolation operator is given by the following theorem.
\begin{theorem}For all $\bm{w} \in  V_u (\Omega) \times V_\theta (\Omega)$ and definition of $\mathcal{C}_{\theta H}$, the local interpolation error can be given as
	\begin{equation}
		\label{eq:thm1}  
		\begin{aligned}
			&\quad \norm{ \bm{u}-  \left(I_\mathrm{ms} \bm{w}\right)_u\norm{_{a,\Omega}^2+} \theta-\left(I_\mathrm{ms} \bm{w}\right)_\theta }_{d, \Omega}^2  \\
			& \lesssim \left(C_2  C(H,k)+C_1 C^2(H,K)\right)\sum_{\omega_k(K)}\norm{L_\gamma^{m,k}(\bm{w})}_{L^2(\omega_k(K))}, 
		\end{aligned}
	\end{equation}
\end{theorem}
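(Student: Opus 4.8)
The plan is to reduce the combined energy error on the left-hand side to the coupled bilinear form $l_\gamma$ evaluated on the error, and then to feed in the two localized estimates already at hand: the energy bound $\norm{\bm{w}-\tilde{\bm{w}}}_{l_\gamma^{m,k}}\lesssim C(H,k)\norm{L_\gamma^{m,k}(\bm{w})}_{L^2(\omega_k(K))}$ and the $L^2$ bound \eqref{eq:lem1}, where throughout $\tilde{\bm{w}}=I^{m,k}\bm{w}$ denotes the (localized) multiscale interpolant. Writing the error as $\bm{e}=\bm{w}-\tilde{\bm{w}}$ with displacement and temperature components $\bm{e}_u$ and $e_\theta$, I would first expand $l_\gamma(\bm{e},\bm{e})$ from the definition \eqref{var4coperator_gamma}: the diagonal forms $a$ and $d$ reproduce exactly $\norm{\bm{e}_u}_{a}^2+\norm{e_\theta}_{d}^2$, while the two coupling terms carrying $\gamma_1$ and $\gamma_2$ collapse into the single skew contribution $(\gamma_2-\gamma_1)\,b(\bm{e}_u,e_\theta)$. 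This yields the identity
\begin{equation*}
\norm{\bm{e}_u}_{a}^2+\norm{e_\theta}_{d}^2 = l_\gamma(\bm{e},\bm{e}) - (\gamma_2-\gamma_1)\,b(\bm{e}_u,e_\theta),
\end{equation*}
which isolates the genuinely coupled part of the error in a single term.

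Next I would control the two terms separately. For the diagonal term I would use the Galerkin orthogonality of the localized corrector, $l_\gamma^{m,k}(\bm{e},\bm{v})=0$ for all $\bm{v}\in V_\mathrm{ms}^{m,k}$, together with the operator identity $l_\gamma^{m,k}(\bm{w},\cdot)=(L_\gamma^{m,k}(\bm{w}),\cdot)$ used in the preceding lemma, to write $l_\gamma^{m,k}(\bm{e},\bm{e})=(L_\gamma^{m,k}(\bm{w}),\bm{e})$; a Cauchy--Schwarz step followed by \eqref{eq:lem1} then bounds it by a multiple of $C^2(H,k)\norm{L_\gamma^{m,k}(\bm{w})}_{L^2(\omega_k(K))}^2$, which accounts for the $C_1C^2(H,k)$ contribution. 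For the coupling term I would invoke the assumed continuity of $b$, namely $b^K(\bm{u},\theta)\le C_0\norm{\bm{u}}_{L_a,K}\norm{\theta}_{L_d,K}$, and estimate one of its two weighted $L^2$-factors by the energy bound (a factor $C(H,k)$) and the other by \eqref{eq:lem1}; the constant $C_0$ and the equivalences between $\norm{\cdot}_{L_a},\norm{\cdot}_{L_d}$ and the plain $L^2$-norm are absorbed into $C_2$, producing the $C_2C(H,k)$ piece.

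Everything above is stated on a single patch $\omega_k(K)$. To reach the global estimate I would sum over all coarse elements $K\in\mathcal{T}_H$ and use the finite-overlap property of the family $\{\omega_k(K)\}$, whose covering multiplicity is bounded by a constant depending only on the oversampling parameter $k$ and the shape-regularity of $\mathcal{T}_H$. The local energy contributions then combine into the global combined energy norm on the left and into $\sum_{\omega_k(K)}\norm{L_\gamma^{m,k}(\bm{w})}_{L^2(\omega_k(K))}$ on the right, up to that uniform multiplicity.

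I expect the main obstacle to be the coupling term $(\gamma_2-\gamma_1)\,b(\bm{e}_u,e_\theta)$. Since $l_\gamma$ is non-symmetric, one cannot simply identify $l_\gamma(\bm{e},\bm{e})$ with the combined energy norm, and it is the inf--sup condition \eqref{eq:inf_sup}, rather than coercivity, that guarantees both that the correctors are well defined and that the two feeding estimates hold. Tracking carefully which factor is measured in the energy norm and which in $L^2$ is precisely what separates the powers $C(H,k)$ and $C^2(H,k)$ in the final constant; by comparison the patch summation and the weighted-norm equivalences are routine.
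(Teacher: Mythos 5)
Your route is genuinely different from the paper's, so a comparison is in order. The paper never expands $l_\gamma(\bm{e},\bm{e})$ at all: it cites an external local estimate (lemma 3.3 of \cite{regularize}) which bounds, elementwise, $\norm{\bm{u}-(I_\mathrm{ms}\bm{w})_u}_{a,K}+\norm{\theta-(I_\mathrm{ms}\bm{w})_\theta}_{d,K}$ by $C_2\norm{\bm{w}-I_\mathrm{ms}\bm{w}}_{l_\gamma(K)}$ plus $C_1$ times the weighted $L^2$-norms $\norm{\cdot}_{L_a,K}$, $\norm{\cdot}_{L_d,K}$ of the error; it then sums over elements, feeding the $l_\gamma$-norm group into the localized energy estimate (this is the origin of the $C_2C(H,k)$ term) and the weighted $L^2$ group into Lemma 1, i.e.\ \eqref{eq:lem1} (the origin of the $C_1C^2(H,k)$ term). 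Your starting identity $\norm{\bm{e}_u}_a^2+\norm{e_\theta}_d^2=l_\gamma(\bm{e},\bm{e})-(\gamma_2-\gamma_1)\,b(\bm{e}_u,e_\theta)$ is correct and replaces that external lemma entirely; it is more self-contained, it makes transparent that for $\gamma_1=\gamma_2$ the coupling contribution vanishes identically, and your final patch-summation step coincides with the paper's.

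Two of your steps, however, do not hold up as written. First, the duality step is wrong: the localized projection gives orthogonality with the error in the \emph{first} slot, $l_\gamma^{m,k}(\bm{e},\bm{v})=0$ for $\bm{v}\in V_\mathrm{ms}^{m,k}$, hence $l_\gamma^{m,k}(\bm{e},\bm{e})=l_\gamma^{m,k}(\bm{e},\bm{w})=(\bm{e},(L_\gamma^{m,k})^\ast(\bm{w}))$. The adjoint operator appears, not $L_\gamma^{m,k}$ applied to $\bm{w}$, and since $l_\gamma$ is non-symmetric these differ (the adjoint swaps and negates the coupling coefficients), so $\norm{(L_\gamma^{m,k})^\ast(\bm{w})}_{L^2}$ is not controlled by $\norm{L_\gamma^{m,k}(\bm{w})}_{L^2}$ alone; your identity $l_\gamma^{m,k}(\bm{e},\bm{e})=(L_\gamma^{m,k}(\bm{w}),\bm{e})$ is false in general. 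The repair is simpler than the broken step: drop duality and square the energy estimate directly, $l_\gamma^{m,k}(\bm{e},\bm{e})=\norm{\bm{e}}_{l_\gamma^{m,k}}^2\lesssim C^2(H,k)\norm{L_\gamma^{m,k}(\bm{w})}_{L^2(\omega_k(K))}^2$. Second, your constant bookkeeping is off: estimating one factor of $b(\bm{e}_u,e_\theta)$ by the energy bound ($C(H,k)$) and the other by \eqref{eq:lem1} ($C^2(H,k)$) produces a $C^3(H,k)$ contribution, not the advertised $C_2C(H,k)$; in the paper's proof the first-power term $C_2C(H,k)$ has nothing to do with the coupling form $b$. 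This mislabeling is not fatal --- in the regime $C(H,k)\le 1$ one has $C^3(H,k)\le C(H,k)$, so your repaired argument delivers a per-patch bound of the form $\bigl(C^2(H,k)+C^3(H,k)\bigr)\norm{L_\gamma^{m,k}(\bm{w})}_{L^2}^2$, which implies (and is in fact sharper than) \eqref{eq:thm1}, up to the paper's own ambiguity between $\norm{L_\gamma^{m,k}(\bm{w})}_{L^2}$ and its square --- but as submitted, the central duality identity is incorrect and the stated provenance of the leading constant is not the right one.
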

where we define $(I_\mathrm{ms}\bm{w})_u$ and $(I_\mathrm{ms}\bm{w})_\theta$  s.t. $I_\mathrm{ms}\bm{w} = ((I_\mathrm{ms}\bm{w})_u , (I_\mathrm{ms}\bm{w})_\theta)$.
\begin{proof}
	From lemma 3.3 of \cite{regularize}, it can be concluded that
	
	$$
	\begin{aligned}
		\label{eq:Kerror}
		\norm{\bm{u}-\left(I_\mathrm{ms} \bm{w}\right)_u}_{a, K}+\norm{\theta-\left(I_\mathrm{ms} \bm{w}\right)_\theta}_{d, K} \lesssim \sum_{y_i \in K}\left(C_2 \norm{\bm{w}-I_\mathrm{ms}\bm{w}}_{l_\gamma(K)} \right. \\ \left.+C_1\left(\norm{\bm{u}-\left(I_\mathrm{ms} \bm{w}\right)_u}_{L_a, K}+\norm{\theta-\left(I_\mathrm{ms} \bm{w}\right)_\theta}_{L_d,K}\right)\right),
	\end{aligned}
	$$
	where $C_1$ and $C_2$ are constants that related to $\gamma,C_0$ but independent of $H$. The following estimations can be given as
	\begin{equation}
		\label{eq:Eerror_interpolatation}
		\begin{aligned}
			& \sum_{K \in \mathcal{T}^H} \sum_{y_i \in K} \norm{\bm{w}-I_\mathrm{ms}\bm{w}}_{l_\gamma(K)}=\sum_{\omega_k(K)} \norm{\bm{w}-I_\mathrm{ms}^{m,k} \bm{w}}_{l_\gamma(K)} \\
			& \lesssim \sum_{\omega_k(K)} C(H,k)\norm{ \bm{w} }_{2, \omega_k(K)}^2, 
		\end{aligned}
	\end{equation}
	\begin{equation}
		\label{eq:L2error_interpolatation}
		\begin{aligned}
			& \sum_{K \in \mathcal{T}^H} \sum_{y_i \in K}\left(\norm{\bm{u}-\left(I_\mathrm{ms}^{m,k} \bm{w}\right)_u}_{L_a, K}+\norm{\theta-\left(I_\mathrm{ms}^{m,k} \bm{w}\right)_\theta}_{L_d, K}\right) \\
			&= \sum_{\omega_k(K)}\left(\norm{\bm{u}-\left(I_\mathrm{ms}^{m,k} \bm{w}\right)_u}_{L_a, \omega_k(K)}+\norm{\theta-\left(I_\mathrm{ms}^{m,k} \bm{w}\right)_\theta}_{L_d, \omega_k(K)}\right) \\
			&\lesssim\sum_{\omega_k(K)}\norm{\bm{w}-I_\mathrm{ms}^{m,k} \bm{w}}_{L^2(\omega_k(K))}^2 \lesssim \sum_{\omega_k(K)}  C^2(H,k)\norm{L_\gamma^{m,k}(\bm{w}) }_{L^2(\omega_k(K))} .
		\end{aligned}
	\end{equation}
	Then, combining the summation of  Eq.(\ref{eq:Kerror}) for all $K \in \Gamma_H$ with  Eq.(\ref{eq:Eerror_interpolatation}) and  Eq.(\ref{eq:L2error_interpolatation}), the proof is complete.
\end{proof}

\subsection{Steady state case}
We begin by presenting the a priori error estimates for the steady-state formulation of the coupled thermoelastic problem. Let $\overline{\bm{u}} \in V_u(\Omega)$ and $\bar{\theta} \in V_\theta(\Omega)$ denote the exact solutions to the continuous problem. The variational formulation reads as follows:
\begin{equation}
	\begin{aligned}
		a\left(\overline{\bm{u}}, \bm{v}_u\right) - b\left(\bm{v}_u, \bar{\theta}\right) &= \left\langle\overline{\bm{f}}, \bm{v}_u\right\rangle_a, && \forall \bm{v}_u \in V_u(\Omega), \\
		d\left(\bar{\theta}, v_\theta\right) &= \left\langle\bar{g}, v_\theta\right\rangle_d, && \forall v_\theta \in V_\theta(\Omega).
	\end{aligned}
	\label{eq:aux}
\end{equation}

The discrete version of system~\eqref{eq:aux} is then written as:
\begin{equation}
	\begin{aligned}
		a\left(\overline{\bm{u}}_H, \bm{v}_{u H}\right) - b\left(\bm{v}_{u H}, \bar{\theta}_H\right) &= \left\langle\overline{\bm{f}}, \bm{v}_{u H}\right\rangle_a, && \forall \bm{v}_{u H} \in V_{u,H}, \\
		d\left(\bar{\theta}_H, v_{\theta H}\right) &= \left\langle\bar{g}, v_{\theta H}\right\rangle_d, && \forall v_{\theta H} \in V_{\theta,H}.
	\end{aligned}
\end{equation}

To analyze the discretization error, we introduce a Riesz projection operator $\mathscr{R}_H$ that maps the continuous solution $(\bm{u}, \theta) \in V_u(\Omega) \times V_\theta(\Omega)$ onto the discrete space $V_{u,H} \times V_{\theta,H}$:
\begin{equation}
	\begin{aligned}
		a\left(\bm{u} - \mathscr{R}_{u H}(\bm{u}, \theta), \bm{v}_{u H}\right) - b\left(\bm{v}_{u H}, \theta - \mathscr{R}_{\theta H}(\theta)\right) &= 0, && \forall \bm{v}_{u H} \in V_{u,H}, \\
		d\left(\theta - \mathscr{R}_{\theta H}(\theta), v_{\theta H}\right) &= 0, && \forall v_{\theta H} \in V_{\theta,H}.
	\end{aligned}
\end{equation}

\begin{lemma}
	Let $(\bm{u}, \theta) \in V_u(\Omega) \times V_\theta(\Omega)$. Then the Riesz projection $\mathscr{R}_H$ satisfies the error bound:
	\begin{equation}
		\begin{aligned}
			\norm{\bm{u} - \mathscr{R}_{u H}(\bm{u}, \theta)}_a &\leq \inf_{\bm{v}_{u H} \in V_{u,H}} \norm{\bm{u} - \bm{v}_{u H}}_a + C_0 \norm{\theta - \mathscr{R}_{\theta H}(\theta)}_c, \\
			\norm{\theta - \mathscr{R}_{\theta H}(\theta)}_d &\leq \inf_{v_{\theta H} \in V_{\theta,H}} \norm{\theta - v_{\theta H}}_d.
		\end{aligned}
		\label{eq:inflem1}
	\end{equation}
\end{lemma}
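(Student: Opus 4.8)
The plan is to treat the two estimates separately, exploiting the fact that the temperature block decouples from the displacement block in the definition of the Riesz projection. I would begin with the second inequality, which is the cleaner of the two. The second defining relation $d(\theta - \mathscr{R}_{\theta H}(\theta), v_{\theta H}) = 0$ for all $v_{\theta H} \in V_{\theta,H}$ is exactly the Galerkin orthogonality associated with the symmetric, coercive form $d(\cdot,\cdot)$. Writing $e_\theta \coloneqq \theta - \mathscr{R}_{\theta H}(\theta)$, for any $v_{\theta H} \in V_{\theta,H}$ I would expand $\norm{e_\theta}_d^2 = d(e_\theta, \theta - v_{\theta H}) + d(e_\theta, v_{\theta H} - \mathscr{R}_{\theta H}(\theta))$ and drop the second term by orthogonality, since $v_{\theta H} - \mathscr{R}_{\theta H}(\theta) \in V_{\theta,H}$. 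A Cauchy--Schwarz step in the $d$-inner product, division by $\norm{e_\theta}_d$, and passage to the infimum over $v_{\theta H}$ then yield the stated best-approximation bound. This is the standard C\'ea argument and presents no difficulty.

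The first inequality is more delicate because the displacement block is coupled to $e_\theta$ through $b(\cdot,\cdot)$: the first defining relation reads $a(e_u, \bm{v}_{uH}) = b(\bm{v}_{uH}, e_\theta)$ for all $\bm{v}_{uH} \in V_{u,H}$, with $e_u \coloneqq \bm{u} - \mathscr{R}_{uH}(\bm{u},\theta)$, which is a \emph{perturbed} rather than exact Galerkin orthogonality. A naive triangle inequality would produce a spurious factor of two on the best-approximation term, so instead I would introduce the auxiliary $a$-orthogonal (Ritz) projection $\Pi_{uH}\bm{u} \in V_{u,H}$ defined by $a(\bm{u} - \Pi_{uH}\bm{u}, \bm{v}_{uH}) = 0$, whose error attains the infimum, $\norm{\bm{u} - \Pi_{uH}\bm{u}}_a = \inf_{\bm{v}_{uH} \in V_{u,H}} \norm{\bm{u} - \bm{v}_{uH}}_a$. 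The key step is then to estimate the finite-element difference $\mathscr{R}_{uH}(\bm{u},\theta) - \Pi_{uH}\bm{u} \in V_{u,H}$. Subtracting the two orthogonality relations gives, for every $\bm{v}_{uH} \in V_{u,H}$, the identity $a(\mathscr{R}_{uH}(\bm{u},\theta) - \Pi_{uH}\bm{u}, \bm{v}_{uH}) = -b(\bm{v}_{uH}, e_\theta)$; testing with $\bm{v}_{uH} = \mathscr{R}_{uH}(\bm{u},\theta) - \Pi_{uH}\bm{u}$ and invoking continuity of $b$ in the form $b(\bm{v}_{uH}, e_\theta) \leq C_0 \norm{\bm{v}_{uH}}_a \norm{e_\theta}_c$ yields $\norm{\mathscr{R}_{uH}(\bm{u},\theta) - \Pi_{uH}\bm{u}}_a \leq C_0 \norm{e_\theta}_c$. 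A final triangle inequality $\norm{e_u}_a \leq \norm{\bm{u} - \Pi_{uH}\bm{u}}_a + \norm{\Pi_{uH}\bm{u} - \mathscr{R}_{uH}(\bm{u},\theta)}_a$ then combines the two pieces into the claimed bound with coefficient one on the approximation term.

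The main obstacle I anticipate is precisely securing the sharp coefficient one rather than two on $\inf_{\bm{v}_{uH}} \norm{\bm{u} - \bm{v}_{uH}}_a$; this is what forces the intermediate Ritz-projection device above rather than a direct energy estimate on $e_u$. A secondary technical point is ensuring that continuity of $b$ is available in the mixed $(\norm{\cdot}_a, \norm{\cdot}_c)$ norms, not only the $(\norm{\cdot}_{L_a}, \norm{\cdot}_{L_d})$ norms in which it was first stated. This follows because the elastic energy norm controls the weighted divergence, $\int_\Omega \alpha |\nabla \cdot \bm{v}_{uH}|^2 \lesssim \norm{\bm{v}_{uH}}_a^2$, so that a Cauchy--Schwarz step gives $b(\bm{v}_{uH}, e_\theta) \leq C_0 \norm{\bm{v}_{uH}}_a \norm{e_\theta}_c$ with $\norm{e_\theta}_c^2 = \int_\Omega \alpha e_\theta^2$. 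With these two ingredients the estimate closes cleanly, and notably no coercivity of the \emph{full} coupled operator is required, since the blocks are resolved in the order temperature-then-displacement.
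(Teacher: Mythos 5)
Your proof is correct, but note that the paper itself states this lemma \emph{without} any proof, so there is no in-paper argument to compare against; your write-up in effect supplies the missing proof. Both halves of your argument are sound: the temperature bound is exactly the C\'ea/best-approximation property of the Ritz projection for the symmetric coercive form $d$, and your treatment of the displacement block via the intermediate elastic Ritz projection $\Pi_{uH}$ is the right device. Subtracting the two orthogonality relations gives $a(\mathscr{R}_{uH}(\bm{u},\theta)-\Pi_{uH}\bm{u},\bm{v}_{uH})=-b(\bm{v}_{uH},\theta-\mathscr{R}_{\theta H}(\theta))$, and testing with the difference itself yields $\norm{\mathscr{R}_{uH}(\bm{u},\theta)-\Pi_{uH}\bm{u}}_a\leq C_0\norm{\theta-\mathscr{R}_{\theta H}(\theta)}_c$; the triangle inequality then produces the bound with coefficient exactly one on the infimum, which a direct energy estimate on $\bm{u}-\mathscr{R}_{uH}(\bm{u},\theta)$ would not give (the quadratic inequality it leads to degrades the constants). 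You are also right to flag, and to repair, a genuine gap in the paper's own setup: the continuity of $b$ is only \emph{assumed} there in the $(\norm{\cdot}_{L_a},\norm{\cdot}_{L_d})$ pair of weighted $L^2$ norms, whereas the lemma's statement requires continuity in the mixed $(\norm{\cdot}_a,\norm{\cdot}_c)$ pair; your derivation $b(\bm{v}_u,\theta)\leq\bigl(\int_\Omega\alpha(\nabla\cdot\bm{v}_u)^2\bigr)^{1/2}\norm{\theta}_c\leq C_0\norm{\bm{v}_u}_a\norm{\theta}_c$, using $\int_\Omega\alpha(\nabla\cdot\bm{v}_u)^2\leq(\alpha_{\max}/\lambda_{\min})\norm{\bm{v}_u}_a^2$, is what actually makes the constant $C_0$ in the lemma meaningful (at the price of a contrast-dependent constant, consistent with the paper's own assumption). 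Your closing observation that no inf-sup or coercivity of the full coupled operator is needed, because the projection system is block-triangular (temperature first, then displacement), is also correct.
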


\begin{lemma}
	\label{lem:dual}
	Let $r \in L_d$, and define $\phi \in V_\theta(\Omega)$ as the solution of the dual problem:
	\[
	d(\phi, v_\theta) = c(r, v_\theta), \quad \forall v_\theta \in V_\theta(\Omega),
	\]
	and let $\phi_H \in V_{\theta,H}$ be its discrete approximation. Then,
	\[
	\norm{\phi - \phi_H}_d \leq C_3 \norm{r}_c, \quad \text{where } C_3 = C_p^{1/2} \kappa_{\min}^{-1/2}.
	\]
\end{lemma}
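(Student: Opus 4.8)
The plan is to recognize $\phi_H$ as the Galerkin approximation of $\phi$ in $V_{\theta,H}$ with respect to the symmetric coercive form $d(\cdot,\cdot)$, and then to combine the resulting best-approximation inequality with a direct stability estimate for the continuous dual solution $\phi$. This is the classical Aubin--Nitsche-type bookkeeping, specialized to the fact that only a bound in the $d$-norm is required here.

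First I would establish Galerkin orthogonality. Since $\phi$ solves $d(\phi,v_\theta)=c(r,v_\theta)$ for all $v_\theta \in V_\theta(\Omega)$ and $\phi_H$ solves the same equation restricted to $v_{\theta H}\in V_{\theta,H}$, subtraction gives $d(\phi-\phi_H,v_{\theta H})=0$ for all $v_{\theta H}\in V_{\theta,H}$. Because $d$ is symmetric and positive definite on $V_\theta(\Omega)$, $\phi_H$ is precisely the $d$-orthogonal projection of $\phi$ onto $V_{\theta,H}$, so that for any $v_{\theta H}$ one has $\norm{\phi-\phi_H}_d^2=d(\phi-\phi_H,\phi-v_{\theta H})\le \norm{\phi-\phi_H}_d\,\norm{\phi-v_{\theta H}}_d$. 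Taking $v_{\theta H}=0\in V_{\theta,H}$ yields the crude but sufficient bound $\norm{\phi-\phi_H}_d\le \norm{\phi}_d$.

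Next I would bound $\norm{\phi}_d$ by $\norm{r}_c$ via a one-line stability argument. Testing the dual problem with $v_\theta=\phi$ gives $\norm{\phi}_d^2=d(\phi,\phi)=c(r,\phi)$, and the Cauchy--Schwarz inequality for the inner product $c(\cdot,\cdot)$ gives $c(r,\phi)\le \norm{r}_c\,\norm{\phi}_c$. It then remains to control $\norm{\phi}_c$ by $\norm{\phi}_d$: the Poincaré inequality $\norm{\phi}_c^2\le C_p\,\norm{\nabla\phi}^2$ together with the coercivity lower bound $\norm{\nabla\phi}^2\le \kappa_{\min}^{-1}d(\phi,\phi)=\kappa_{\min}^{-1}\norm{\phi}_d^2$ yields $\norm{\phi}_c\le C_p^{1/2}\kappa_{\min}^{-1/2}\norm{\phi}_d=C_3\norm{\phi}_d$. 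Substituting back, $\norm{\phi}_d^2\le C_3\,\norm{r}_c\,\norm{\phi}_d$; dividing by $\norm{\phi}_d$ (the case $\phi=0$ being trivial) gives $\norm{\phi}_d\le C_3\norm{r}_c$, and chaining with the previous step completes the proof.

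The argument itself is routine, so the only real point requiring care is matching the constant exactly. Producing precisely $C_3=C_p^{1/2}\kappa_{\min}^{-1/2}$ hinges on using the sharp coercivity constant $\kappa_{\min}$ for $d$ and on interpreting $C_p$ and $\norm{\cdot}_c$ consistently, namely so that the Poincaré estimate $\norm{\phi}_c^2\le C_p\norm{\nabla\phi}^2$ already absorbs the weight appearing in $c(\cdot,\cdot)$; with any other convention an extra factor (e.g.\ $\alpha_{\max}^{1/2}$) would surface. I would also note in passing that existence and uniqueness of $\phi$ follow from Lax--Milgram, since $d$ is coercive on $V_\theta(\Omega)$ and $r\mapsto c(r,\cdot)$ is a bounded linear functional there.
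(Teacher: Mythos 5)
Your proof is correct and yields exactly the paper's constant, but it is organized differently from the paper's argument. The paper closes the estimate in one pass on the error: by Galerkin orthogonality and the dual equation tested with $v_\theta=\phi-\phi_H$, it writes $\norm{\phi-\phi_H}_d^2=d(\phi,\phi-\phi_H)=c(r,\phi-\phi_H)\le\norm{r}_c\norm{\phi-\phi_H}_c$, and then uses the coercivity--Poincar\'e bound $\norm{\phi-\phi_H}_c\le C_p^{1/2}\kappa_{\min}^{-1/2}\norm{\phi-\phi_H}_d$ to divide through and conclude. You instead decompose the bound into a C\'ea-type contraction step, $\norm{\phi-\phi_H}_d\le\norm{\phi}_d$ (the Ritz projection with comparison function $0$), plus a stability estimate for the continuous dual solution, $\norm{\phi}_d\le C_3\norm{r}_c$, obtained by testing with $\phi$ itself. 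The two inequality chains are identical term by term, merely applied to $\phi$ rather than to $\phi-\phi_H$, which is why the constant $C_3=C_p^{1/2}\kappa_{\min}^{-1/2}$ comes out the same. What your split buys is a clean separation of concerns: the discrete approximation costs nothing beyond a factor $1$, and the result is exposed as a purely continuous stability bound that holds for the Ritz projection onto any conforming subspace, not just $V_{\theta,H}$; what the paper's one-shot version buys is brevity. Your cautionary remark about conventions is also on target: the paper's proof uses precisely $\norm{\nabla v}^2\ge C_p^{-1}\norm{v}_c^2$, i.e.\ the Poincar\'e constant is taken to absorb the weight appearing in $\norm{\cdot}_c$, exactly as you assumed, so no stray factor of the coefficient bound appears.
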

\begin{proof}
	The estimate follows by testing the dual problem with $\phi - \phi_H$ and applying the Poincar\'{e} inequality:
	\begin{equation}
		\begin{aligned}
			\norm{\phi - \phi_H}_d^2 &= d(\phi, \phi - \phi_H) = c(r, \phi - \phi_H) \leq \norm{r}_c \norm{\phi - \phi_H}_c, \\
			\norm{\phi - \phi_H}_d^2 &\geq \kappa_{\min} \norm{\nabla(\phi - \phi_H)}^2 \geq C_p^{-1} \kappa_{\min} \norm{\phi - \phi_H}_c^2.
		\end{aligned}
		\label{eq:duals}
	\end{equation}
	Combining the inequalities yields the desired result.
\end{proof}

\begin{lemma}
	\label{lem:projection}
	Let $\bm{w} = (\bm{u}, \theta) \in V_u(\Omega) \times V_\theta(\Omega)$. Then the following a priori error estimates hold:
	\begin{equation}
		\begin{aligned}
			\norm{\bm{u} - \mathscr{R}_{u H}(\bm{u}, \theta)}_a &\lesssim \max\left\{1, C_0 C_3\right\} \tilde{C}(H,k) \norm{L_\gamma^{m,k}(\bm{w})}_{L^2(\Omega)}, \\
			\norm{\theta - \mathscr{R}_{\theta H}(\theta)}_d &\lesssim \tilde{C}(H,k) \norm{L_\gamma^{m,k}(\bm{w})}_{L^2(\Omega)}, \\
			\norm{\theta - \mathscr{R}_{\theta H}(\theta)}_c &\lesssim C_3 \tilde{C}(H,k) \norm{L_\gamma^{m,k}(\bm{w})}_{L^2(\Omega)},
		\end{aligned}
		\label{eq:projection}
	\end{equation}
	where
	\begin{align*}
		&\tilde{C}(H,k) := \left[C_2 C(H,k) + C_1 C^2(H,k)\right]^{1/2}, \\
		&\norm{L_\gamma^{m,k}(\bm{w})}_{L^2(\Omega)} := \sum_{\omega_k(K)} \norm{L_\gamma^{m,k}(\bm{w})}_{L^2(\omega_k(K))}.
	\end{align*}
\end{lemma}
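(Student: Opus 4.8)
The plan is to prove the three bounds sequentially, exploiting the triangular structure of the coupled system: because the temperature equation decouples from the displacement, I would first control $\theta - \mathscr{R}_{\theta H}(\theta)$ in the energy ($d$) norm, then upgrade this to the weaker $c$-norm by duality, and finally feed the $c$-norm bound into the displacement estimate, where the coupling constant $C_0$ enters.

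First, for the temperature energy estimate I would use the second line of \eqref{eq:inflem1}, which identifies $\mathscr{R}_{\theta H}(\theta)$ as the $d$-orthogonal (best) approximation of $\theta$ in the discrete space. Since the best approximation dominates the error of any particular discrete candidate, I would insert the multiscale interpolant $(I_\mathrm{ms}\bm{w})_\theta$ and invoke the interpolation Theorem \eqref{eq:thm1} to obtain $\norm{\theta - \mathscr{R}_{\theta H}(\theta)}_d \lesssim \tilde{C}(H,k)\norm{L_\gamma^{m,k}(\bm{w})}_{L^2(\Omega)}$, which is the second claimed inequality.

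Next, for the $c$-norm bound I would run the Aubin--Nitsche duality argument already set up in Lemma~\ref{lem:dual}, taking $r = \theta - \mathscr{R}_{\theta H}(\theta)$. Writing the dual solution $\phi$ and its discrete projection $\phi_H$, the Galerkin orthogonality of $\mathscr{R}_{\theta H}$ gives $d(\phi_H, r) = 0$, so
\[
\norm{r}_c^2 = c(r,r) = d(\phi, r) = d(\phi - \phi_H, r) \le \norm{\phi - \phi_H}_d\,\norm{r}_d \le C_3 \norm{r}_c\,\norm{r}_d,
\]
whence $\norm{r}_c \le C_3 \norm{r}_d$. Combined with the previous step this yields the third inequality, $\norm{\theta - \mathscr{R}_{\theta H}(\theta)}_c \lesssim C_3\,\tilde{C}(H,k)\norm{L_\gamma^{m,k}(\bm{w})}_{L^2(\Omega)}$.

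Finally, for the displacement I would invoke the first line of \eqref{eq:inflem1}, bounding $\norm{\bm{u} - \mathscr{R}_{uH}(\bm{u},\theta)}_a$ by the best $a$-approximation of $\bm{u}$ plus the coupling term $C_0\norm{\theta - \mathscr{R}_{\theta H}(\theta)}_c$. The first summand is again controlled by Theorem \eqref{eq:thm1} through $(I_\mathrm{ms}\bm{w})_u$, giving a factor $\tilde{C}(H,k)$; the second is controlled by the $c$-norm bound just derived, contributing a factor $C_0 C_3\,\tilde{C}(H,k)$. Taking the larger of the two factors produces the constant $\max\{1, C_0 C_3\}$ and the first inequality. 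The main obstacle is the middle step: the duality estimate is where a genuinely new bound is produced, and care is needed both to confirm that $r$ is an admissible test function and to ensure that the displacement's coupling contribution is measured in the $c$-norm rather than the $d$-norm, since it is exactly this norm mismatch that forces the use of Lemma~\ref{lem:dual} and the appearance of $C_3$. A secondary subtlety is the identification tying the two subsections together, namely replacing the infimum over the discrete space by the multiscale interpolant of Theorem \eqref{eq:thm1}, i.e. treating $\mathscr{R}_H$ as the quasi-optimal projection onto the LOD space.
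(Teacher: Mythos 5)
Your proposal follows essentially the same route as the paper's proof: the duality argument of Lemma~\ref{lem:dual} applied to $r = \theta - \mathscr{R}_{\theta H}(\theta)$ to obtain $\norm{r}_c \le C_3 \norm{r}_d$, followed by substitution into \eqref{eq:inflem1}, with the infimum terms controlled via the multiscale interpolation estimate. You additionally make explicit two points the paper leaves implicit, namely the insertion of $(I_\mathrm{ms}\bm{w})_u$ and $(I_\mathrm{ms}\bm{w})_\theta$ as admissible candidates (i.e. reading $\mathscr{R}_H$ as the quasi-optimal projection onto the LOD space), and the correct constant $C_3$ in the $c$--$d$ norm comparison, where the paper's proof writes $C_1$, apparently a typo.
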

\begin{proof}
	Let $r := \theta - \mathscr{R}_{\theta H}(\theta)$ and consider
	\[
	\norm{\theta - \mathscr{R}_{\theta H}(\theta)}_c^2 = c(r, r) = d(\phi, r) = d(\phi - \phi_H, r) \leq \norm{\phi - \phi_H}_d \norm{r}_d.
	\]
	Applying Lemma~\ref{lem:dual}, we obtain
	\begin{equation}
		\norm{\theta - \mathscr{R}_{\theta H}(\theta)}_c \leq C_1 \norm{\theta - \mathscr{R}_{\theta H}(\theta)}_d.
		\label{eq:normcd}
	\end{equation}
	Substituting this into~\eqref{eq:inflem1} completes the proof.
\end{proof}

\subsection{The Prior Error Estimate of ME-LOD}
For $\bm{f}$ and $g$ in problem \ref{eq}, we have

$$
\left\langle \tilde{\bm{f}}, \bm{v}_u \right\rangle_a = \left\langle \bm{f}, \bm{v}_u \right\rangle, \quad \forall \bm{v}_u \in V_u(\Omega), \quad \left\langle \tilde{g}, v_\theta \right\rangle_d = \left\langle g, v_\theta \right\rangle, \quad \forall v_\theta \in V_\theta(\Omega),
$$
where $\tilde{\bm{f}} \in V_u(\Omega)$ and $\tilde{g} \in V_\theta(\Omega)$. Define

$$
C^n(\bm{f}, g) = \frac{1}{2} \norm{ \tilde{\bm{f}}^n - \tilde{\bm{f}}_H^n }_a^2 + \tau_n \norm{ \tilde{g}^n - \tilde{g}_H^n }_d^2, \quad \forall n \in \{1, \dots, N_T\}.
$$
For simplicity of notation, let

$$
\begin{aligned}
	& C_1^n(\bm{w}) = 4 C_3^2 \left(C_3^2 + C_0^2 \max \left\{ 1, C_3^2 C_0^2 \right\} \right) \norm{ \partial_t \bm{w} }_{L^\infty(T_n, \norm{\cdot}_{2, \Omega})}, \\
	& C_2^n(\bm{w}) = 2 C_3^2 \left( \norm{ \partial_{tt} \theta }_{L^\infty(T_n, \norm{\cdot}_c)} + C_0^2 \norm{ \partial_{tt} \bm{u} }_{L^\infty(T_n, \norm{\cdot}_a)} \right).
\end{aligned}
$$
Then we have the following prior error estimate of the ME-LOD.

\begin{theorem}
	Let $\bm{w} = (\bm{u}, \theta)$ and $\bm{w}_H = (\bm{u}_H, \theta_H)$ be the unique solution and ME-LOD solution of the problem (equation), then there holds
	
	$$
	\begin{aligned}
		& \quad  \norm{ \bm{u}^n - \bm{u}_H^n }_a^2 + \norm{ \theta^n - \theta_H^n }_c^2 \\
		& \lesssim \sum_{m=1}^n \big[C^m(\bm{f}, g) + \tau_m \tilde{C}^2(H, k) C_1^m(\bm{w})  \\
		& \quad + \tau_m^3 C_2^m(\bm{w}) \big] 
		+ \frac{1}{2} \left( C_3^2 + \max \left\{ 1, C_3^2 C_0^2 \right\} \right) \tilde{C}^2(H, k) \norm{ l_\gamma^{m, k} (\bm{w}) }_{L^2(\Omega)}^2,
	\end{aligned}
	$$
	and
	
	$$
	\begin{aligned}
		\sum_{m=1}^n  \tau_m \norm{ \theta^m - \theta_H^m }_d^2 
		& \lesssim \sum_{m=1}^n \left[ C^m(\bm{f}, g) + \tau_m \tilde{C}^2(H, k) C_1^m(\bm{w}) \right. \\
		& \quad \left. + \tau_m^3 C_2^m(\bm{w}) \right]
		+ \sum_{m=1}^n \frac{1}{4} \tau_m \tilde{C}^2(H, k) \norm{ l_\gamma^{m, k} (\bm{w}) }_{L^2(\Omega)}^2.
	\end{aligned}
	$$
	where $n \in \{1, 2, \dots, N_T\}$.
\end{theorem}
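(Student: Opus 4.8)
The plan is to follow the standard elliptic-projection energy argument for coupled parabolic systems, splitting the total error through the Riesz projection introduced before Lemma~\ref{lem:projection} and then deriving a discrete energy identity in which the thermoelastic coupling cancels. First I would write $\bm{u}^n-\bm{u}_H^n=\rho_u^n+e_u^n$ and $\theta^n-\theta_H^n=\rho_\theta^n+e_\theta^n$, where $\rho_u^n:=\bm{u}^n-\mathscr{R}_{uH}(\bm{w}^n)$ and $\rho_\theta^n:=\theta^n-\mathscr{R}_{\theta H}(\theta^n)$ are the projection errors, while $e_u^n:=\mathscr{R}_{uH}(\bm{w}^n)-\bm{u}_H^n$ and $e_\theta^n:=\mathscr{R}_{\theta H}(\theta^n)-\theta_H^n$ are the discrete remainders. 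The projection parts are controlled directly by Lemma~\ref{lem:projection}, which contributes the terms carrying $\tilde{C}(H,k)$ and $\norm{l_\gamma^{m,k}(\bm{w})}_{L^2(\Omega)}$, so the triangle inequality reduces the theorem to an estimate for $e_u^n$ and $e_\theta^n$.

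The central step is the energy identity. Subtracting the ME-LOD scheme Eq.~\eqref{eq:ms_galerkin} from the continuous equations evaluated at $t^n$, and using the defining property of the Riesz projection to eliminate $\rho$ from the elliptic pairings, yields error equations of the form $a(e_u^n,\bm{v}_u)-b(\bm{v}_u,e_\theta^n)=R_u^n(\bm{v}_u)$ and $c(D_\tau e_\theta^n,v_\theta)+d(e_\theta^n,v_\theta)+b(D_\tau e_u^n,v_\theta)=R_\theta^n(v_\theta)$, where the residuals $R_u^n,R_\theta^n$ collect the data-approximation mismatch $\tilde{\bm{f}}^n-\tilde{\bm{f}}_H^n$ and $\tilde{g}^n-\tilde{g}_H^n$, the projection errors of the time increments, and the backward-Euler consistency terms. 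Testing the mechanical equation with $D_\tau e_u^n$ and the thermal equation with $e_\theta^n$ and adding, the two coupling contributions $\mp b(D_\tau e_u^n,e_\theta^n)$ cancel exactly, leaving
\[
a(e_u^n,D_\tau e_u^n)+c(D_\tau e_\theta^n,e_\theta^n)+\norm{e_\theta^n}_d^2=R_u^n(D_\tau e_u^n)+R_\theta^n(e_\theta^n).
\]
This cancellation is precisely where the unified coupled treatment of ME-LOD pays off, and it is what makes the energy method closed.

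Next I would apply the elementary identity $2a(x,x-y)\geq\norm{x}_a^2-\norm{y}_a^2$ (and its $c$-analogue) to the first two terms, multiply by $\tau_n$, and telescope the sum over $m=1,\dots,n$. This produces $\tfrac12\norm{e_u^n}_a^2+\tfrac12\norm{e_\theta^n}_c^2+\sum_{m}\tau_m\norm{e_\theta^m}_d^2$ on the left, together with the initial contributions $\tfrac12\norm{e_u^0}_a^2+\tfrac12\norm{e_\theta^0}_c^2$, which are absorbed into the final projection term via Lemma~\ref{lem:projection}. On the right, each residual is bounded by Cauchy--Schwarz together with the continuity constant $C_0$ of $b(\cdot,\cdot)$ and the dual bound $C_3$ of Lemma~\ref{lem:dual}: the data terms give $C^m(\bm{f},g)$; the projection error of $D_\tau\rho$, obtained by commuting the linear projection with the difference quotient and invoking Lemma~\ref{lem:projection} for $\partial_t\bm{w}$, gives the factor $C_1^m(\bm{w})$; and the discrepancy $D_\tau\bm{w}^n-\partial_t\bm{w}(t^n)$, estimated by a Taylor remainder in $\partial_{tt}\bm{w}$, gives the $\tau_m^3 C_2^m(\bm{w})$ term. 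A Young inequality with carefully chosen weights absorbs the $\norm{e_\theta^m}_d$, $\norm{e_u^n}_a$ and $\norm{e_\theta^n}_c$ factors into the left-hand side, which is what degrades the leading constant from $\tfrac12$ to $\tfrac14$ and produces the $\tfrac18$ in the dissipation estimate; the second stated inequality then follows by retaining the summed $\norm{e_\theta^m}_d^2$ term and adding the projection error $\sum_m\tau_m\norm{\rho_\theta^m}_d^2$.

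I expect the main obstacle to be the sharp bookkeeping of the residual terms rather than the structural energy identity itself. In particular, controlling the projection error of the time increment $D_\tau\rho^n$ requires that the Riesz projection commute with $\partial_t$ so that Lemma~\ref{lem:projection} applies to $\partial_t\bm{w}$, and controlling the backward-Euler consistency of the coupled term $b(D_\tau\bm{u}^n,\cdot)$ against $b(\dot{\bm{u}},\cdot)$ demands an $L^\infty$-in-time bound on $\partial_{tt}\bm{u}$ and $\partial_{tt}\theta$. Threading the constants $C_0$ and $C_3$ through the Young inequalities so as to recover exactly the coefficients $\max\{1,C_3^2 C_0^2\}$ and $C_3^2+C_0^2$ appearing in $C_1^n$ and $C_2^n$ is the delicate part.
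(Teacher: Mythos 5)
Your proposal is correct and follows essentially the same route as the paper's proof: the same Riesz-projection splitting into $\rho$ and $\eta$ (discrete remainder) parts, the same choice of test functions (time increment of the mechanical remainder and current thermal remainder) producing the exact cancellation of the coupling terms $\mp b(\cdot,\cdot)$, the same telescoping energy identity, and the same treatment of the residuals via Lemma~\ref{lem:dual}, Lemma~\ref{lem:projection} applied to $\partial_t\bm{w}$, and a Taylor remainder in $\partial_{tt}\bm{w}$, with Young inequalities degrading the constants to the stated $\tfrac14$ and $\tfrac18$. The delicate points you flag (commuting the projection with $\partial_t$, the $L^\infty$-in-time regularity, and the constant bookkeeping) are precisely the steps carried out in the paper's estimates for $\delta_{\theta H}^n$ and $\delta_{uH}^n$.
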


\begin{proof}
	By the definition of the Riesz projection operator $\mathscr{R}_H$, we define
	
	$$
	\eta_{u H}^n = \mathscr{R}_{u H}(\bm{u}^n, \theta^n) - \bm{u}_H^n, \quad \eta_{\theta H}^n = \mathscr{R}_{\theta H}(\theta^n) - \theta_H^n.
	$$
	Combining Eq.(\ref{eq:discrete}), we have
	
	$$
	\begin{aligned}
		&\quad a(\eta_{u H}^n, \bm{v}_{u H}) - b(\bm{v}_{u H}, \eta_{\theta H}^n) \\
		&\qquad + c(\eta_{\theta H}^n - \eta_{\theta H}^{n-1}, v_{\theta H}) + b(\eta_{u H}^n - \eta_{u H}^{n-1}, v_{\theta H})  + \tau_n d(\eta_{\theta H}^n, v_{\theta H}) \\
		& = \left\langle \tilde{\bm{f}}^n - \tilde{\bm{f}}_H^n, \bm{v}_{u H} \right\rangle_a + \tau_n \left\langle \tilde{g}^n - \tilde{g}_H^n, v_{\theta H} \right\rangle_d \\
		& \qquad + c(\delta_{\theta H}^n, v_{\theta H}) + b(\delta_{u H}^n, v_{\theta H}), \quad \forall (\bm{v}_{u H}, v_{\theta H}) \in V_{\text{cgm}},
	\end{aligned}
	$$
	where
	
	$$
	\begin{aligned}
		& \delta_{\theta H}^n = \mathscr{R}_{\theta H}(\theta^n) - \mathscr{R}_{\theta H}(\theta^{n-1}) - \tau_n \partial_t \theta^n, \\
		& \delta_{u H}^n = \mathscr{R}_{u H}(\bm{u}^n, \theta^n) - \mathscr{R}_{u H}(\bm{u}^{n-1}, \theta^{n-1}) - \tau_n \partial_t \bm{u}^n.
	\end{aligned}
	$$
	
	Define $v_{u H} = \eta_{u H}^n - \eta_{u H}^{n-1} \in V_{u,H}$ and $v_{\theta H} = \eta_{\theta H}^n$, it follows
	\begin{equation}
		\begin{aligned}
			& \frac{1}{2} \norm{\eta_{u H}^n}_a^2 + \frac{1}{2} \norm{\eta_{u H}^n - \eta_{u H}^{n-1}}_a^2 + \frac{1}{2} \norm{\eta_{\theta H}^n}_c^2 + \frac{1}{2} \norm{\eta_{\theta H}^n - \eta_{\theta H}^{n-1}}_c^2 + \tau_n \norm{\eta_{H d \theta}^n}_d^2 \\
			& = \frac{1}{2} \norm{\eta_{u H}^{n-1}}_a^2 + \frac{1}{2} \norm{\eta_{\theta H}^{n-1}}_c^2 + \left\langle \tilde{\bm{f}}^n - \tilde{\bm{f}}_H^n, \eta_{u H}^n - \eta_{u H}^{n-1} \right\rangle_a \\
			& + \tau_n \left\langle \tilde{g}^n - \tilde{g}_H^n, \eta_{\theta H}^n \right\rangle_d + c(\delta_{\theta H}^n, \eta_{\theta H}^n) + b(\delta_{u H}^n, \eta_{u H}^n).
		\end{aligned}
		\label{eq:reserror}
	\end{equation}
	
	Similar to Eq.(\ref{eq:duals}), by using $\norm{\eta_{\theta H}^n}_c \leq C_1 \norm{\eta_{\theta H}^n}_d$, we have
	
	$$
	\begin{aligned}
		\frac{1}{2} \norm{\eta_{u H}^n}_a^2 + \frac{1}{2} \norm{\eta_{\theta H}^n}_c^2 + \frac{1}{4} \tau_n \norm{\eta_{\theta H}^n}_d^2 
		& \leq \frac{1}{2} \norm{\eta_{u H}^{n-1}}_a^2 + \frac{1}{2} \norm{\eta_{\theta H}^{n-1}}_c^2 + C^n(\bm{f}, g) \\
		& + 4 C_3^2 \tau_n^{-1} \norm{\delta_{\theta H}^n}_c^2 + 4 C_3^2 \tau_n^{-1} C_0^2 \norm{\delta_{u H}^n}_a^2.
	\end{aligned}
	$$
	Based on the regularity assumption of the problem and Eq.(\ref{eq:projection}), we have
	\begin{equation}
		\begin{aligned}
			\norm{\delta_{\theta H}^n}_c & = \norm{ - \int_{T_n} \left[ \partial_t \theta(s) - \mathscr{R}_{\theta H} (\partial_t \theta(s)) \right] ds - \int_{T_n} \left( s - t_{n-1} \right) \partial_{tt} \theta(s) dt }_c \\
			& \leq \tau_n C_3 \tilde{C}(H,k) \norm{\partial_t \bm{w}}_{L^\infty(T_n, \norm{\cdot}_{2, \Omega})} + \frac{1}{2} \tau_n^2 \norm{\partial_{tt} \theta}_{L^\infty(T_n, \norm{\cdot}_c)}.
		\end{aligned}
		\label{eq:delta_theta}
	\end{equation}
	
	Similarly, using Eq.(\ref{eq:projection}), we obtain
	\begin{equation}
		\norm{\delta_{u H}^n}_a \leq \tau_n \max \left\{ 1, C_3 C_0 \right\} \tilde{C}(H,k) \norm{\partial_t \bm{w}}_{L^\infty(T_n, \norm{\cdot}_{2, \Omega})}.
	\end{equation}
	Thus, by substituting these estimates into Eq.(\ref{eq:reserror}), we obtain the error estimate.
\end{proof}
This section proves that a coupled LOD algorithm with a regularization coefficient $\gamma$ is convergent, which is to satisfy the inf-sup condition for static problems. It is not necessary to introduce a special $\gamma$ in the algorithm, that is  $\gamma_1=\gamma_2=1$, to achieve good results. The appropriate selection of $\gamma$ will also lead to some slight optimization of the algorithm's accuracy. However, this conclusion drawn from numerical experiments has not yet been proven.

\section{Numerical experiments}
\label{sec:experiments}
In this section, we perform three numerical experiments to evaluate the performance of the proposed ME-LOD methods. The computation domain is set to be the unit square $\Omega = [0,1]\times [0,1]$ and both the temperature and the displacement have a homogeneous dirichlet boundary condition, that is $\mathcal{T}_\theta^D = \mathcal{T}_{\bm{u}}^D =\partial \Omega$ and $\mathcal{T}_\theta^N = \mathcal{T}_{\bm{u}}^N = \emptyset $.

The solutions $(\bm{u}^{melod},\theta^{melod})$ of ME-LOD will be compared with the reference solutions $(\bm{u}^{ref},\theta^{ref})$ of the standard finite element method and the solutions $(\bm{u}^{lod}, \theta^{lod})$ of LOD. Then, the relative energy errors of each solution, the total relative energy errors are defined as follows

\begin{equation}
	\label{eq:errors}
	\begin{aligned}
		\norm{E_u}_e= & \frac{\left(\int_{\Omega} \sigma\left(E_u\right): \epsilon\left(E_u\right) d x\right)^{\frac{1}{2}}}{\left(\int_{\Omega} \sigma(\bm{u}): \epsilon(\bm{u}) d x\right)^{\frac{1}{2}}},\norm{E_\theta}_e=\frac{\left(\int_{\Omega} \kappa \nabla E_\theta \cdot \nabla E_\theta d x\right)^{\frac{1}{2}}}{\left(\int_{\Omega} \kappa \nabla \theta \cdot \nabla \theta d x\right)^{\frac{1}{2}}}, \\
		\norm{E_w}_e= & \frac{\left(\int_{\Omega} \sigma\left(E_u\right): \epsilon\left(E_u\right)+\kappa \nabla E_\theta \cdot \nabla E_\theta d x\right)^{\frac{1}{2}}}{\left(\int_{\Omega} \sigma(\bm{u}): \epsilon(\bm{u})+\kappa \nabla \theta \cdot \nabla \theta d x\right)^{\frac{1}{2}}},
	\end{aligned}
\end{equation}
where $E_\theta$ represents the temperature error  $E_{\theta}^{melod} = \theta^{melod}-\theta^{ref} $ or $ E_{\theta}^{lod} = \theta^{lod}-\theta^{ref}$ and $E_u$ represents the displacement error $E_{u}^{melod}=\bm{u}^{melod}-\bm{u}^{ref} $ or $E_{u}^{lod}=\bm{u}^{lod}-\bm{u}^{ref}$. 


\subsection{Convergence of ME-LOD}
\label{subsec:test1}

In this test, heterogeneous media have random microstructure and coefficients. The body force $\bm{f}$ and heat source $g$ are chosen as

$$
\bm{f} = (0,0) ,\quad g = 10.
$$
The initial boundary condition is defined as

$$
\theta_0 = 500\sin(\pi x)\sin(\pi y).
$$ The material coefficients $\kappa(\bm{x}; \xi)$, $ \lambda(\bm{x}; \xi)$, $\mu(\bm{x}; \xi)$, and $\beta(\bm{x}; \xi)$ satisfy the following logarithmic Gaussian random field 

$$
\exp \left(\mathcal{G P}\left(b_0(\bm{x}), \operatorname{Cov}\left(\bm{x}_1, \bm{x}_2\right)\right)\right),
$$
where $\operatorname{Cov}(x_1,x_2) = \sigma^2 \exp(-\norm{x_1-x_2}^2 / l^2)$, $x_1$ and $x_2$ are spatial coordinates in $\Omega$, $\sigma^2$ is the overall variance, and $l$ is the length scale. Then the coefficients are shown on the left of Fig. \ref{fig:3coe}.

\begin{figure}[htbp]
	\centering
	\includegraphics[scale=0.5]{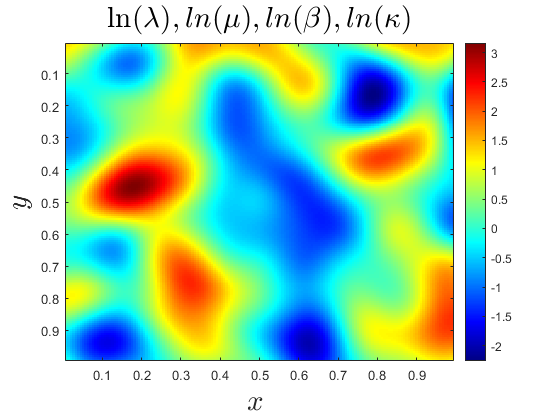}
	\caption{Contour plots of the material coefficients in the random microstructure. Lam\'{e} coefficients $\mu$, $\lambda$, thermal conductivity coefficient $\kappa$, and expansion coefficient $\beta$.}
	\label{fig:3coe}
\end{figure}
The reference solution is computed on a mesh of $h = \sqrt{2} \cdot 2^{-7}$ and the ME-LOD is computed for five decreasing values of the mesh size, namely, $H = \sqrt{2} \cdot 2^{-2}, \sqrt{2} \cdot 2^{-3}, ...,\sqrt{2} \cdot 2^{-5}$. The patch sizes $k$ are chosen such that $k \sim \log(H^{-1})$, that is $k = 1, 2, 3, 4$. Then, the relative errors are shown in Fig. \ref{fig:3error}, where the left graph shows the relative errors for the displacement and the right graph shows the error for the temperature. As expected, the ME-LOD shows a convergence of optimal order.

\begin{figure}[htpb]
	\centering
	
	\hfill
	\begin{subfigure}[b]{0.95\textwidth}
		\includegraphics[width=0.47\textwidth]{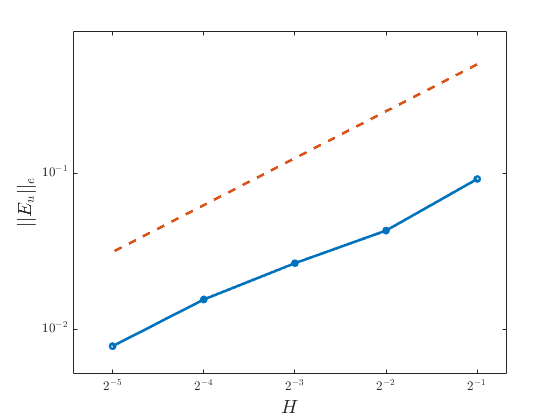}
		\hfill
		\includegraphics[width=0.47\textwidth]{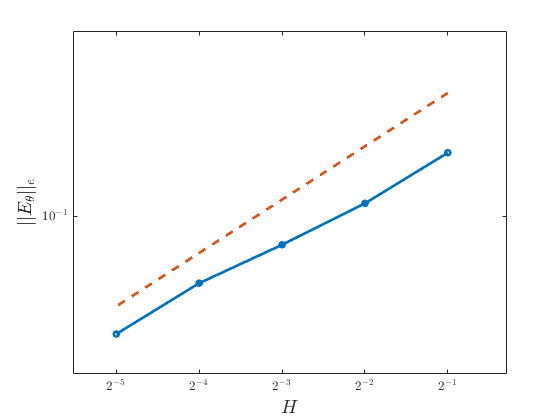}
	\end{subfigure}
	\hfill
	
	\caption{Relative errors using ME-LOD plotted against the mesh size $H$. The dashed line is $H$.}
	\label{fig:3error}
\end{figure}

\subsection{Comparison of LOD and ME-LOD on periodic structures}
In this example, the efficiency and superiority of ME-LOD will be verified for heterogeneous media with periodic microstructure. The initial conditions, boundary conditions, and source terms are the same as in subsection \ref{subsec:test1}. The Lam\'{e} coefficients $\mu,\lambda$, thermal conductivity coefficient $\kappa$, and expansion coefficient $\alpha$ are shown in Fig. \ref{fig:1coe}, where the contrast is chosen as $\lambda_{\max}:\lambda_{\min} = \mu_{\max}:\mu_{\min} = \kappa_{\max}:\kappa_{\min} =\alpha_{\max}:\alpha_{\min} = 10^3:1$. The reference solution is calculated on a fine mesh of $h = \sqrt{2} \times 2^{-7}$, and the multiscale basis is defined on a coarse mesh of $H = \sqrt{2} \times 2^{-4}$.

\begin{figure}[htbp]
	\centering
	\includegraphics[scale=0.5]{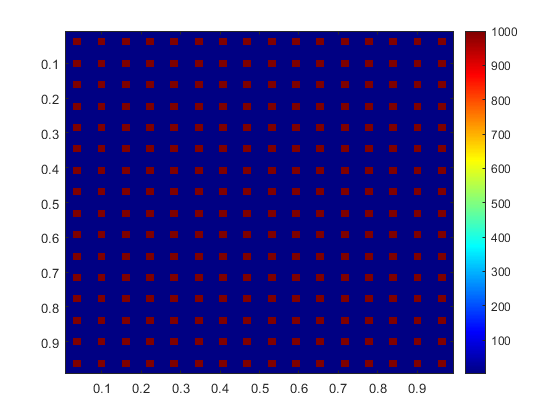}
	\caption{Contour plots of the material coefficients in periodic microstructure. Lam\'{e} coefficients $\mu$, $\lambda$, thermal conductivity coefficient $\kappa$, and expansion coefficient $\beta$.}
	\label{fig:1coe}
\end{figure}

\begin{figure}[htpb]
	
	\centering
	\begin{subfigure}[b]{0.95\textwidth}
		\includegraphics[width=0.32\textwidth]{1u1\_fem4}
		\hfill
		\includegraphics[width=0.32\textwidth]{1u2\_fem4}
		\hfill
		\includegraphics[width=0.32\textwidth]{1u3\_fem4}
		\caption{FEM}
	\end{subfigure}
	\hfill
	\begin{subfigure}[b]{0.95\textwidth}
		\includegraphics[width=0.32\textwidth]{1u1\_lod4}
		\hfill
		\includegraphics[width=0.32\textwidth]{1u2\_lod4}
		\hfill
		\includegraphics[width=0.32\textwidth]{1u3\_lod4}
		\caption{LOD}
	\end{subfigure}
	
	\begin{subfigure}[b]{0.95\textwidth}
		\includegraphics[width=0.32\textwidth]{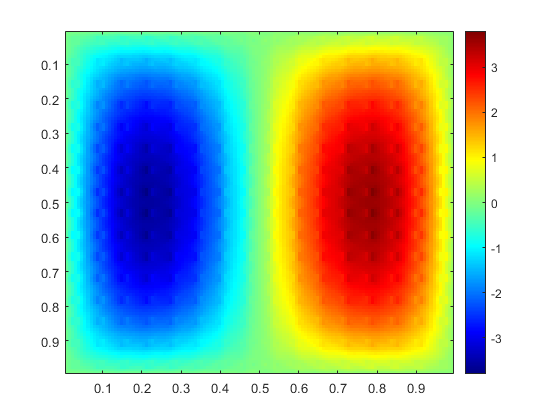}
		\hfill
		\includegraphics[width=0.32\textwidth]{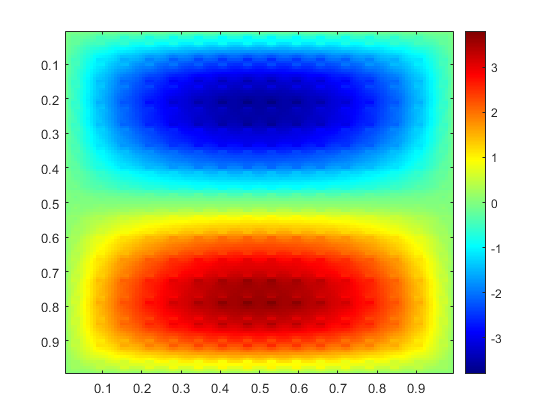}
		\hfill
		\includegraphics[width=0.32\textwidth]{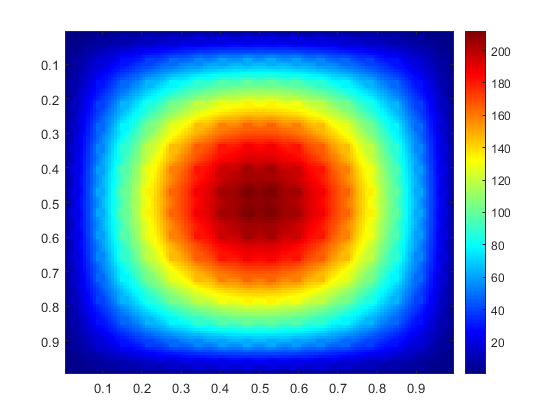}
		\caption{ME-LOD}
	\end{subfigure}
	\hfill
	\caption{Contour plots of the numerical solution of normal FEM, LOD method, and ME-LOD method}
	\label{1sol}
\end{figure}
Fig. \ref{1sol} demonstrates the contour plots of reference solutions $\left(\bm{u}^{r e f}, \theta^{r e f}\right)$, ME-LOD solutions $\left(\bm{u}^{melod}, \theta^{melod}\right)$, and LOD solutions $\left(\bm{u}^{lod}, \theta^{lod}\right)$. It can be concluded that the proposed ME-LOD has higher accuracy than the LOD with the same size of local multiscale basis functions. For comparison purposes, we calculate the energy errors of LOD and ME-LOD as defined by  Eq.(\ref{eq:errors}). The numerical results for different extension numbers $k=2,3,4,5$ of multiscale basis functions are given in Table 1. 

\begin{table}[htbp]
	\centering
	\caption{Relative energy errors with different patch sizes for periodic microstructure.}
	\begin{tabular}{cllll}
		\hline patch size & $\norm{E_w^{melod}}_e$ & $\norm{E_w^{lod}}_e$ & $\norm{E_w^{melod}}_{L^2}$ & $\norm{E_w^{lod}}_{L^2}$  \\
		\hline 
		
		$k=2$ & 2.18E-01 & 8.27E-01 & 9.36E-02 & 6.92E-01 \\
		$k=3$ & 5.98E-02 & 4.48E-01 & 7.31E-03 & 1.29E-01 \\
		$k=4$ & 1.67E-02 & 3.08E-01 & 3.07E-03 & 2.72E-02 \\
		$k=5$ & 6.65E-03 & 2.54E-01 & 4.20E-03 & 1.04E-02 \\
		\hline
	\end{tabular}
\end{table}

From the table, we observe that at the beginning, the total relative energy error $\norm{E_w^{melod}}_e$ for ME-LOD decreases with the increase of $k$, but as the number of oversampling layers $k$ gets larger, which means a larger computational load, the error decay slowly. This situation indicates the newly constructed multiscale basis function has better local properties, that is, they can obtain enough information within a small local area. Moreover,  it can also clearly be found that the energy errors and $L^2(\Omega)$ of ME-LOD with $k = 2$ are obviously smaller than those of LOD with $k = 5$ in Fig. \ref{fig:1error}, which demonstrate that the ME-LOD is more efficient than the LOD.

\begin{figure}[htbp]
	\centering
	\begin{subfigure}[b]{0.49\textwidth}
		\centering
		\includegraphics[width=\textwidth]{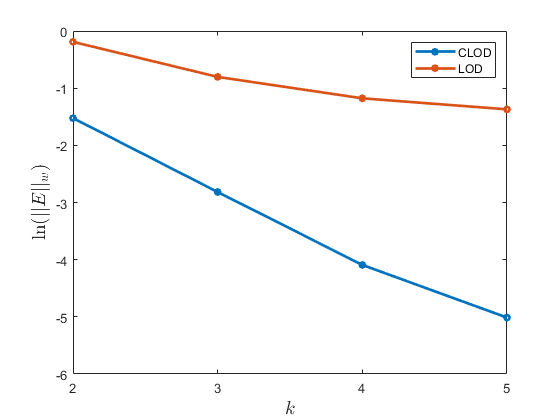}
		\caption{}
	\end{subfigure}
	\hfill 
	\begin{subfigure}[b]{0.49\textwidth}
		\centering
		\includegraphics[width=\textwidth]{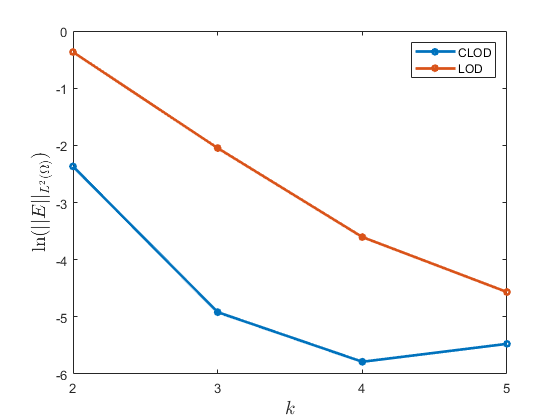}
		\caption{}
	\end{subfigure}
	
	\caption{Comparison of relative (a) energy errors and (b) $L^2$ errors of the ME-LOD and LOD in periodic microstructure.}
	\label{fig:1error}
\end{figure}

\subsection{Robustness of ME-LOD at high contrast ratios}
The last example shows the importance of the couple design of multiscale basis functions, which is designed to handle multiscale behavior in the coefficients. In this simulation, the body force $\bm{f}$ and heat source $g$ are chosen as

$$
\bm{f}(x, y)=\bm{0}, \quad g(x, y)=10 \times \exp \left(-\frac{(x-0.2)^2+(y-0.8)^2}{2 * 0.2^2}\right).
$$

The initial boundary condition is defined as

$$
\theta_0(x, y)=1000x(1-x)y(1-y).
$$

Then the Lam\'{e} coefficients $\mu, \lambda$, the conductivity coefficient $\kappa$ and the thermal expansion coefficient $\beta$ are shown to the left of Fig. \ref{fig:2coe},
\begin{figure}[htbp]
	\centering
	\includegraphics[scale=0.5]{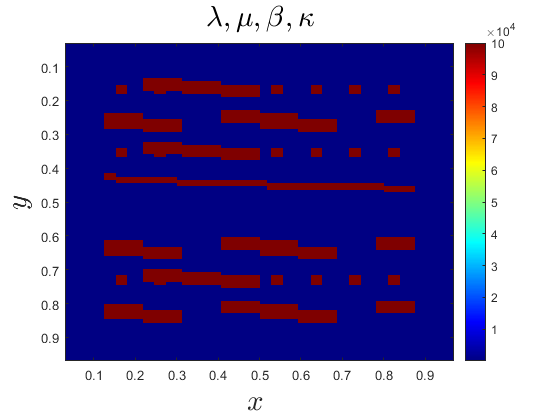}
	\caption{Contour plots of the coefficients in the third test. The contrasts of Lam\'{e} coefficients $\mu$, $\lambda$, thermal conductivity coefficient $\kappa$, and expansion coefficient $\beta$ are $10^5$}
	\label{fig:2coe}
\end{figure}
where the contrasts are chosen as $\lambda_{\max }: \lambda_{\min }= \mu_{\max }: \mu_{\min }= \kappa_{\max }: \kappa_{\min }= \beta_{\max }: \beta_{\min }= 10^1,10^2,\cdots ,10^8$ and the time step is $\tau=0.02$. Here, the $2^7 \times 2^7$ fine grid is used as a reference solution with the $2^4 \times 2^4$ coarse grid for the proposed ME-LOD and LOD. 	
The reference solutions ($\bm{u}^{\text {ref}}, \theta^{\text {ref}}$), the ME-LOD solutions ($\bm{u}^{melod}, \theta^{melod}$ ), and the LOD solutions ($\left.\bm{u}^{lod}, \theta^{lod}\right)$ of the test are presented in Fig. \ref{2sol}. It can be observed that the ME-LOD solutions have a higher accuracy than the LOD solutions when comparing the reference solutions, which is consistent with the periodic case. Moreover, to explore the influence of different coefficients on the results, we compare the energy errors and $L^2(\Omega)$ errors of the ME-LOD and LOD with the change of $\alpha$. Table \ref{tab:2error} reports the energy errors $ E_w$ and the $L^2(\Omega)$- errors of the ME-LOD and LOD for the third test in detail, where the contrast ratio of $\alpha_{\max}$ and $\alpha_{min}$ constantly varies. To make our point clearer, we also show the trend of the energy errors with $\alpha_{\max}$ and $\alpha_{\min}$ as contrast ratios in Fig. \ref{fig:2error}. For both the ME-LOD and LOD, we observe that the energy errors of the ME-LOD are almost at the same level, while those of the LOD change significantly. Although the whole system becomes extremely complex with increasing the contrast ratio or the variance of two material coefficients, the energy error of the whole system stays in a stable state, which is hardly affected by the complexity of the system.
Unlike the results of periodic micro-structures, the LOD method cannot find the correct numerical solution when the contrast of multiscale coefficients increases, even with an increase in patch size. In contrast, ME-LOD causes the relative error to increase slowly as the ratio increases, showing that our proposed method has stronger robustness for high-contrast cases.

\begin{figure}[htpb]
	
	\centering
	\begin{subfigure}[b]{1.0\textwidth}
		\includegraphics[width=0.32\textwidth]{2u1\_fem4}
		\hfill
		\includegraphics[width=0.32\textwidth]{2u2\_fem4}
		\hfill
		\includegraphics[width=0.32\textwidth]{2u3\_fem4}
		\caption{FEM}
	\end{subfigure}
	\hfill
	\begin{subfigure}[b]{0.95\textwidth}
		\includegraphics[width=0.32\textwidth]{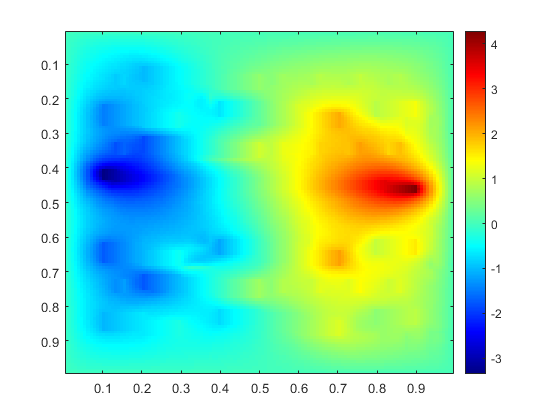}
		\hfill
		\includegraphics[width=0.32\textwidth]{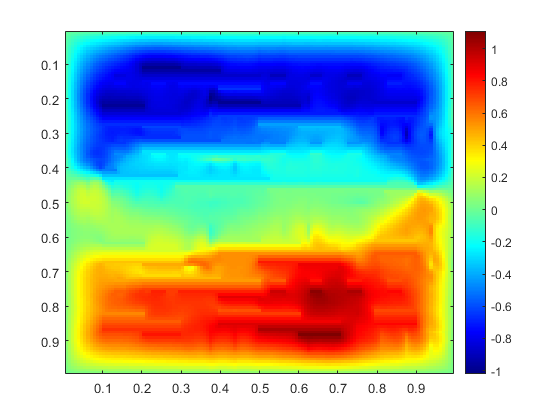}
		\hfill
		\includegraphics[width=0.32\textwidth]{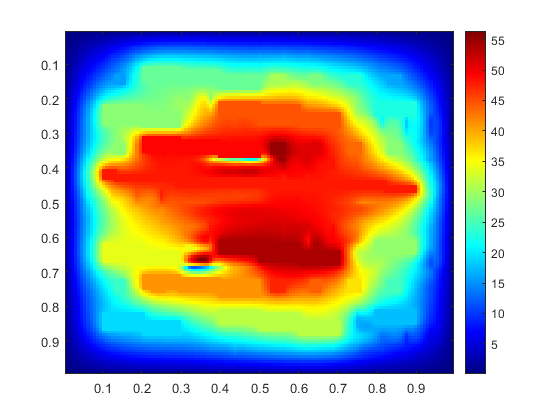}
		\caption{ME-LOD}
	\end{subfigure}
	\hfill
	\caption{Contour plots of the numerical solutions}
	\label{2sol}
\end{figure}

\begin{table}[htbp]
	\centering
	\caption{Relative errors with different ratios of multiscale coefficients.}
	\label{tab:2error}
	\begin{tabular}{cllll}
		\hline ratio & $\norm{E_w^{melod}}_e$ & $\norm{E_w^{lod}}_e$ & $\norm{E_w^{melod}}_{L^2}$ & $\norm{E_w^{lod}}_{L^2}$  \\
		\hline 
		$10^1$ & 2.26E-02 & 7.09E-02 & 1.08E-03 & 4.66E-03 \\
		$10^2$ & 3.60E-02 & 4.20E-01 & 4.41E-03 & 3.60E-01 \\
		$10^3$ & 1.14E-01 & 3.97E-01 & 2.77E-02 & 1.10E-01 \\
		$10^4$ & 1.26E-01 & 4.23E-01 & 5.74E-02 & 1.53E-01 \\
		$10^5$ & 6.55E-02 & 4.03E-01 & 4.98E-02 & 1.57E-01 \\
		$10^6$ & 4.13E-02 & 3.87E-01 & 5.91E-02 & 1.50E-01 \\
		$10^7$ & 4.55E-02 & 9.17E-01 & 8.33E-02 & 9.07E-01 \\
		$10^8$ & 6.81E-02 & 9.15E-01 & 1.14E-01 & 9.04E-01 \\
		\hline
	\end{tabular}
	
\end{table}
\begin{figure}[htpb]
	
	\centering
	
	\hfill
	\begin{subfigure}[b]{0.95\textwidth}
		\includegraphics[width=0.47\textwidth]{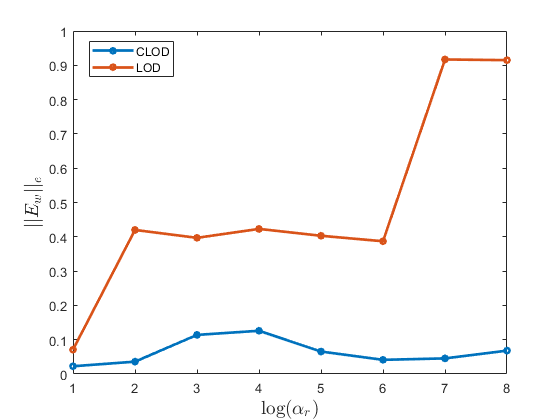}
		\hfill
		\includegraphics[width=0.47\textwidth]{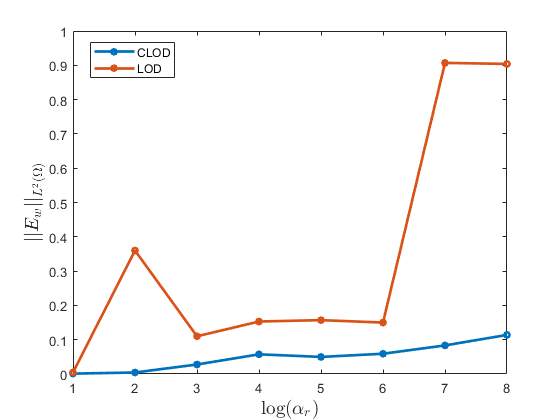}
	\end{subfigure}
	\hfill
	
	\caption{Comparison of relative energy errors for the ME-LOD and LOD with different contrast ratios of $\alpha$}
	\label{fig:2error}
\end{figure}

The first test shows that as the grid is refined, the ME-LOD method achieves the desired convergence rate, supporting its effectiveness in solving multiscale thermomechanical problems. The second test verifies that ME-LOD achieves higher precision for heterogeneous materials with periodic microstructures, and the third test further illustrates ME-LOD's enhanced robustness and stability in addressing complex thermomechanical problems involving high-contrast materials. Furthermore, the last two tests demonstrate that the newly constructed ME-LOD method significantly outperforms the traditional LOD in accuracy and efficiency. These results confirm the broad applicability and substantial advantages of ME-LOD in handling complex material challenges. Those numerical results demonstrate that the ME-LOD is computationally quite efficient and accurate with wide applicability in many scenarios.

\section{Conclusions}
\label{sec:conclusion}
This study has proposed a ME-LOD method that advances multiscale computation for thermomechanical problems. 
Unlike conventional approaches that treat thermal and mechanical interactions separately, ME-LOD constructs a unified operator that improves both the accuracy and efficiency of multiscale simulations. 
The new basis functions derived from ME-LOD capture localized effects more effectively, ensuring robust performance even under high material contrasts. 
Numerical tests demonstrate that ME-LOD consistently outperforms traditional LOD approaches in terms of accuracy and computational efficiency, establishing it as a powerful alternative for analyzing complex multiscale phenomena in heterogeneous materials. 
Although this study focuses on thermomechanical coupling---a representative case of multiphysics interactions---the ME-LOD framework is inherently versatile and can be readily extended to other coupled systems, such as electrothermal or magnetomechanical problems.

\bmhead{Acknowledgements}
The authors also thank the anonymous reviewers for their valuable comments and suggestions, which helped to improve the quality of this manuscript.

\bmhead{Author contribution}
Yuzhou Nan: writing, methodology, theoretical analysis. Yajun Wang: methodology, theoretical analysis. Changqing Ye: writing, review, editing. Hang Qi: review, editing. Xiaofei Guan: writing, methodology, review, editing.

\bmhead{Funding}This work was supported by the National Natural Science Foundation of China (No.~12271409), the Foundation of National Key Laboratory of Computational Physics, the Basic Research Project of CNPC (2023ZZ05), and the Fundamental Research Funds for the Central Universities.

\section*{Declarations}

\bmhead{Conflict of interest}The authors declare no competing interests.



\bibliography{sn-bibliography}

\end{document}